\documentclass[preprint,12pt]{elsarticle}
\usepackage[top=3cm, bottom=3cm, left=2.2cm, right=2.2cm]{geometry}
\usepackage{amssymb}    % 数学符号
\usepackage{amsmath}    % 基础公式环境
\usepackage{amsthm}     % 定理、引理、定义环境
\usepackage{mathtools}  % 复杂公式扩展（分段函数、积分等）
\usepackage{cases}      % 优化分段函数排版
\usepackage{hyperref}   % 可点击交叉引用
\usepackage{multirow}  % 用于单元格跨行合并
\usepackage{array}     % 增强表格兼容性，避免语法错误
\usepackage{makecell}  % 单元格内换行核心包（新增）
\usepackage{ragged2e}  % 左右对齐+自动换行优化（必加载）
\usepackage{adjustbox}   % 核心：自动缩放表格到页面宽度（不拉伸文字）

\usepackage[table]{xcolor}

\newtheorem{theorem}{Theorem}[section]    % 定理：1.1, 2.1...
\newtheorem{lemma}[theorem]{Lemma}        % 引理：与定理共享编号
\newtheorem{definition}[theorem]{Definition}  % 定义：与定理共享编号
\newtheorem{proposition}[theorem]{Proposition} % 命题：与定理共享编号
\newtheorem{remark}[theorem]{Remark} 
\newtheorem{corollary}[theorem]{Corollary}
\newcommand{\R}{{\mathbb R}}

\begin{document}

\begin{frontmatter}

%% 标题（填写你的论文标题）
\title{Steady states and dynamics of a higher dimensional thin film equation}

%% 作者（多个作者用 \and 分隔）
\author{Shen Bian \corref{cor1}}
\cortext[cor1]{Corresponding author. E-mail: bianshen66@163.com} % 通讯作者

%% 作者单位（多个单位用 label 区分，按需添加）
\affiliation{organization={Department of Mathematics, Beijing University of Chemical Technology},
            addressline={No. 15 East North Third Ring Road, Chaoyang District},
            city={Beijing},
            postcode={100029},
            country={China}}

%% 摘要（必填，控制在 150-200 词，概括研究目的、方法、结果）

\begin{abstract}
We study a higher-dimensional thin film equation that incorporates competitive effects between aggregation and repulsion, where 
repulsion is modeled by fourth-order diffusion and aggregation by backward second-order degenerate diffusion, with a degenerate diffusion exponent $m>0$. We first conduct a systematic analysis of the existence and geometric properties of steady-state solutions for all $m>0$, revealing a critical threshold $m^*=(d+2)/(d-2)$ for variational compactness and solution structure. For $0 < m < m^*$, we then prove that, under natural regularity constraints, radially decreasing steady states coincide with both the extremals of the Gagliardo-Nirenberg-Sobolev inequality and the global minimizers of the free energy. Moreover, we establish the uniqueness of such steady states for $m \neq 1 + 2/d$. Furthermore, in the supercritical regime $1 + 2/d < m < m^*$, we identify a sharp threshold given by the $L^{m+1}$ norm of the unique radial steady-state solution, which distinguishes between global existence for initial data below the threshold and finite-time blow-up for initial data above the threshold. The main contribution of this work is to use steady-state solutions as a theoretical pivot to construct a unified analytical framework that connects parameter classification, variational structure and dynamic behavior. This framework elucidates how the regularity barrier prevents infinite energy descent and selects stable equilibrium states, and thus predicts the global evolution of the system, thereby providing a unified variational principle for understanding steady-state selection and dynamic bifurcations in such higher-order degenerate diffusion equations.
\end{abstract}

%%Research highlights
\iffalse
\begin{highlights}
\item Scale-invariant critical curve $1/m_1+1/m_2=(d+2)/d$ for the system's free energy.
\item Radial, non-increasing and non-compactly supported steady states $(U_s,V_s)$ provide sharp dynamical thresholds.
\item Sharp dichotomy: global existence or finite-time blow-up via stationary $L^{m_1}$/$L^{m_2}$ norms.
\item $L^{m_1}$/$L^{m_2}$ norms critically balance nonlinear diffusion and nonlocal aggregation.
\end{highlights}
\fi
%% 关键词（3-5 个，数学方向相关，用 \sep 分隔）
\begin{keyword}
Degenerate diffusion \sep Steady-state solutions \sep Global existence \sep Blow-up criterion \sep Functionals
\end{keyword}
% 35K65 \sep 34K21 \sep 35A01 \sep 35B44 \sep 35B38
\end{frontmatter}

%% 正文开始

\section{Introduction}

This paper investigates the fourth-order nonlinear degenerate parabolic equation 
\begin{align}\label{tfsystem}
\left\{
  \begin{array}{ll}
u_t=-  \nabla \cdot (u \nabla \Delta u) - \nabla \cdot (u \nabla u^m), & x \in \R^d, ~t > 0, \\
u(x,0)=u_0(x) \ge 0, & x \in \R^d,
\end{array}
\right.
\end{align}
where the diffusion exponent $m>0$. This model is governed by a fundamental competition between two nonlinear effects: the fourth-order term enforces stabilization and repulsion, whereas the backward second-order diffusion term drives destabilizing aggregation.

In one spatial dimension, equation \eqref{tfsystem} is a special case of the long-wave unstable generalized thin film equation
\begin{align}
u_t + (u^n u_{xxx})_x + (u (u^m)_x)_x = 0,
\end{align}
where $u(x,t)$ represents the height of the evolving free surface. The exponents $m$ and $n$ correspond to the powers in the destabilizing second-order diffusive term and the stabilizing fourth-order diffusive term respectively. This class of models appears in many physical systems involving fluid interfaces \cite{M98,ODB97}. When $n=1$ and $m=1$, the equation describes a thin jet in a Hele–Shaw cell \cite{Tom1,Tom23,Tom36}. When $n=m=3$, the equation describes fluid droplets hanging from a ceiling \cite{Tom26}. Over the past fifteen years, these models have also been the subject of rigorous and extensive mathematical analysis \cite{BP98,Tom15,Tom54,Tom45}. 

In this paper, we consider equation \eqref{tfsystem} in the multi-dimensional setting with $d \ge 3$. Initial data will be assumed throughout this paper
\begin{align}\label{initial}
u_0 \in L^1(\R^d)\cap L^{m+1}(\R^d),\quad  \nabla u_0 \in L^2(\R^d), \quad \int_{\R^d} |x|^2 u_0(x) dx< \infty. 
\end{align}
Equation \eqref{tfsystem} possesses several a priori estimates. The total mass is conserved
\begin{align}\label{mass}
M:=\int_{\R^d} u_0(x) dx=\int_{\R^d} u(x,t) dx.
\end{align}
It also admits a dissipation principle for the free energy $F(u)$, defined by
\begin{align}\label{Fu}
\left\{
  \begin{array}{ll}
  F(u)=\frac{1}{2}\int_{\R^d} |\nabla u|^2 dx-\frac{1}{m+1} \int_{\R^d} u^{m+1} dx, \\[3mm]
  \frac{d F(u)}{dt}=-\int_{\R^d} u \left|\nabla(\Delta u + u^m)\right|^2 dx \le 0,
  \end{array}
\right.
\end{align}
where the second identity is obtained by multiplying equation \eqref{tfsystem} by $\mu=-\Delta u-u^m$. The main property of this energy is that it consists of two terms: the gradient energy $\frac{1}{2}\int_{\R^d} |\nabla u|^2 dx$ and the potential energy $-\frac{1}{m+1} \int_{\R^d} u^{m+1} dx$. The difference in signs allows for a competition between aggregation and repulsion. 

Note that equation \eqref{tfsystem} possesses a scaling invariance. If $u(x,t)$ is a solution to equation \eqref{tfsystem}, then the scaled function 
\begin{align}\label{ulambda}
u_\lambda(x)=\lambda^{\frac{2}{m-1}} u(\lambda x, \lambda^{\frac{4m-2}{m-1}} t)
\end{align}
is also a solution to \eqref{tfsystem}. This scaling preserves the $L^p$ norm
\begin{align}
\|u_\lambda\|_{L^p(\R^d)}=\|u\|_{L^p(\R^d)},\qquad p:=\frac{d(m-1)}{2}.
\end{align}
For the critical case $m=1+2/d$, the scaling reduces to the mass-invariant scaling $u_\lambda=\lambda^d u(\lambda x,\lambda^{d+4}t)$, under which the second-order aggregative term $\lambda^{dm+d+2} \nabla \cdot (u_{\lambda} \nabla u_{\lambda}^m)$ and the fourth-order repulsive term $\lambda^{2d+4} \nabla \cdot (u_{\lambda} \nabla \Delta u_\lambda)$ balance. For the subcritical case $0<m<1+2/d$, aggregation dominates repulsion for small $\lambda$, which prevents spreading. While for large $\lambda$, repulsion dominates aggregation, thus precluding blow-up. On the contrary, for the supercritical case $m>1+2/d$, aggregation dominates repulsion for large $\lambda$, and finite-time blow-up may occur. For small $\lambda$, repulsion dominates aggregation, allowing infinite-time spreading. This type of competition between aggregation and repulsion is a common feature in many nonlinear models, including Hele-Shaw flow, stellar collapse, chemotaxis models, and the nonlinear Schrödinger equation (see \cite{BL13,BCL09,BCKS99,B02,C03,C67,F01,KS70,M89,P07,SS99,W83}). In these models, the precise balance of the competing mechanisms critically governs the dynamics of solutions, ultimately determining the dichotomy between global existence and finite-time blow-up. 

For the subcritical case $0<m<1+2/d$, this is the repulsion-dominated regime. In one dimension, \cite{BP98} proved that blow-up is impossible for $m<3$. For higher-dimensional cases with $m<1+2/d$, \cite{Jose24} established the global existence of gradient-flow solutions for general initial data. 

For the critical case $m=1+2/d$, this is the fair competition regime where repulsion and aggregation balance. It has been the subject of extensive research in the existing literature, yielding a wealth of significant results in both one dimension and higher dimensions. In one dimension, \cite{WBB04} showed that there exists a critical mass $M_*$ defined by the best constant of the Sz. Nagy inequality \cite{Nagy41}, such that solutions exist globally in time if the initial mass is less than $M_*$, whereas there are blowing-up solutions if the initial mass is larger than $M_*$. Moreover, $M_*$ is the mass of the compactly supported steady states, which have also been studied in detail. Furthermore, in higher dimensions, \cite{LW17} determined a critical mass $M_c$ via the optimisers of a generalised Nagy inequality and proved finite-time blow-up of solutions for $M>M_c$. We point out that establishing the global existence of solutions in higher-dimensional cases is far more challenging than in one-dimensional cases due to stricter regularity estimates, including discussions on compactness and estimates of higher-order regularity. \cite{Jose24} employed the classical variational minimising movement scheme and the flow interchange technique to derive suitable regularity for weak solutions and proved the global existence of solutions for $M<M_c$.

For the supercritical case $m>1+2/d$, this is the aggregation-dominated regime. In one dimension, \cite{BP00} established the existence of finite-time blow-up solutions for $m>3$, and properties of steady states were also investigated in \cite{Tom42,Tom43,Tom44}. For higher-dimensional cases, to the best of our knowledge, there is little information regarding global existence or finite-time blow-up. This is seemingly due to the fact that mass conservation no longer plays a primary role, and the geometric structure of possible singularities is far more complex than that in lower-dimensional cases. The main goal of this paper is to investigate the global existence and finite-time blow-up of solutions in the supercritical case in higher dimensions, as well as the existence of steady states for all $m>0$.

We emphasize that, in contrast to this well-documented mass-critical exponent, there exists a distinct and fundamental energy-critical exponent $m=(d+2)/(d-2)>1+2/d$ within the supercritical regime, which has received little attention to date. In fact, substituting the scaling \eqref{ulambda} into the free energy $F(u)$ yields
\begin{align}
F(u_\lambda)=\lambda^{\frac{2(m+1)}{m-1}-d} F(u).
\end{align}
The free energy is invariant when $m=(d+2)/(d-2)$. For this reason, we refer to $(d+2)/(d-2)$ as the energy-critical exponent. More importantly, it also serves as the critical exponent for the steady-state solutions to \eqref{tfsystem} (see Section \ref{stationary} below). 

In this work, we first investigate the existence and fundamental properties of steady-state solutions for all $m>0$, and prove that no radial steady-state solutions exist when $m > (d+2)/(d-2)$. For the range $0<m<(d+2)/(d-2)$, we further show that the extremal functions of the Gagliardo-Nirenberg-Sobolev (GNS) inequality (see \eqref{GNS} below) under the corresponding constraints are exactly radially decreasing, compactly supported steady-state solutions, which also attain the global minimum of the free energy. Notably, this steady-state solution is unique when $m \neq 1+2/d$. In particular, for $1+2/d < m < (d+2)/(d-2)$, we establish a sharp threshold based on the $L^{m+1}$-norm of the unique steady-state solution: solutions exist globally if the initial data lies below this threshold, while finite-time blow-up occurs if the initial data exceeds it. 

The contribution of this work is threefold: to characterize the variational structure of the free energy, to identify ground-state solutions as extremals of the GNS inequality, and to establish a sharp threshold that separates global existence from finite-time blow-up. Our results also clarify the role of the regularity barrier in stabilizing equilibrium states. These results are summarized in Table \ref{table1} below.

The remainder of this paper is organized as follows. Section \ref{sec1} is devoted to the precise statements of the main results. Section \ref{sec2} establishes the existence and fundamental properties of steady-state solutions. Subsequently, Section \ref{sec3} investigates the variational structure of the GNS inequality and identifies global minimizers of the free energy. Finally, in the supercritical regime, using the unique free-energy minimizer, Section \ref{sec4} establishes a sharp threshold on the initial data that distinguishes between global existence and finite-time blow-up of solutions.

In what follows, we denote by $C$ a generic constant (which may vary from line to line) and by $C=C(\cdot,\cdots,\cdot)$ a constant depending only on the quantities in parentheses. We also use the simplified notation $\|\cdot\|_{r}:=\|\cdot\|_{L^r(\R^d)}$ for $1 \le r <\infty$.

\section{Main results and preliminaries}\label{sec1}

This section is devoted to the statement of our main results, which are organized into three parts. First, Section \ref{stationary} concerns the existence and basic properties of steady-state solutions. Second, Section \ref{GNSmini} addresses the connection between radially symmetric steady states and extremals of the GNS inequality. Third, Section \ref{dynamic} provides a sharp threshold for the dynamical behavior of solutions in the supercritical regime.

Table \ref{table1} provides a unified summary of the exponent regimes for both steady-state and dynamical solutions of \eqref{tfsystem}. The detailed statements follow.

\begin{table}[htb]
  \centering
  \caption{Summaries of main results}
  \label{table1}
   \vspace{.1in}
  \adjustbox{max width=\linewidth}{
    \begin{tabular}{|c|c|c|c|c|}
      \hline
      Regime & $m$ & Dynamic solutions & Steady states & Free energy \\ \hline
      
      % Row 1
      \multirow{3}{*}{ \makecell{ Supercritical case \\ $m>1+2/d$ } } & 
      \makecell[c]{\vspace{3pt}$m>\frac{d+2}{d-2}$\vspace{3pt}} & 
      \multirow{2}{*}{ } & 
      \makecell{All stationary solutions \\ are compactly supported, \\ no radially decreasing \\ solutions (Theorem \ref{steadythm}(iii)). } & 
       \\ 
      \cline{2-4}
      
      % Row 2
      &   \makecell[c]{\vspace{2pt} Energy-critical case: \\ $m=\frac{d+2}{d-2}$ \vspace{0pt}}  & \makecell{ } 
      & \makecell{There are radially \\ symmetric, non-compactly \\ supported stationary \\ solutions (Theorem \ref{steadythm}(ii)).} & 
      \multirow{2}{*}{} \\  % 跨2行
      \cline{2-4} \cline{5-5}
      
      % Row 3
      & \makecell[l]{\vspace{3pt} $1+\frac{2}{d}<m<\frac{d+2}{d-2} $\vspace{0pt}} & \makecell{$L^{m+1}$-norm of a unique \\
      radially decreasing steady \\ state provides a sharp \\ critical threshold  \\ (Theorem \ref{finiteblowup}, Theorem \ref{globalexistence}). } & 
      \multirow{3}{*}{\makecell{All stationary solutions \\ are compactly supported \\ (Theorem \ref{steadythm}(i)). }} & \makecell{ Unique global minimizer\\for any given mass and\\ its corresponding $L^{m+1}$ \\ -norm (Proposition \ref{picture}).}
      \\  % 这里为空
      \cline{1-3} \cline{5-5}
      
      % Row 4
      \makecell{Mass-critical case} & 
      \makecell[l]{\vspace{3pt}$m=1+\frac{2}{d}$\vspace{3pt}} & \makecell{Solutions blow up \\ for large mass\cite{LW17} \\ and exist globally \\ for small mass \cite{Jose24,WBB04}.}
      & & \makecell{Infinitely many \\ global minimizers\\for a specific mass \\ (Proposition \ref{picture1}). } \\
      \cline{1-3} \cline{5-5}
      
      % Row 5
      \makecell{Subcritical case} & 
      \makecell[l]{\vspace{3pt}$0<m<1+\frac{2}{d}$\vspace{3pt}} & 
      \makecell{All solutions exist \\ globally in time \cite{Jose24}} & & \makecell{Unique global minimizer \\ for any given mass \\ (Proposition \ref{picture2})} \\
      \hline
    \end{tabular}
  }
\end{table}

\subsection{Steady-state solutions for all $m>0$} \label{stationary}

Let $U_s$ be a steady-state solution to \eqref{tfsystem} and its support be
\begin{align*}
\Omega := \{x \mid U_s(x) > 0\}.
\end{align*}
Our first result characterizes the steady-state solutions of \eqref{tfsystem}, which are summarized as follows:
\begin{enumerate}
\item[(i)] For all $m>0$, any steady-state solution satisfies the elliptic equation
      \begin{align}\label{ellip}
        -\Delta U_s-U_s^m=\frac{1}{M}\frac{(d-2)m-(d+2)}{(d+2)(m+1)} \|U_s\|_{m+1}^{m+1} \quad \text{a.e. in } \Omega.
      \end{align}
      See Proposition \ref{prop1}.
\item[(ii)] For $0<m<(d+2)/(d-2)$, all nonnegative steady-state solutions are compactly supported. If the support $\Omega$ is a star-shaped domain with $C^1$ boundary, then $\frac{\partial U_s}{\partial \vec{n}}=0$ on $\partial\Omega$. See Theorem \ref{steadythm}(i).    
\item[(iii)] For $m=(d+2)/(d-2)$:
    \begin{enumerate}
        \item[(a)] If the support is the entire space ($\Omega = \R^d$), then $U_s$ is uniquely given by the explicit radial formula
        \begin{align}
           U_s=\left( \frac{\sqrt{d(d-2)} \lambda}{\lambda^2+|x-x_0|^2}\right)^{\frac{d-2}{2}} \quad \text{for some } \lambda>0,\; x_0 \in \R^d.
       \end{align}    
        \item[(b)] If $\Omega$ is a star-shaped bounded domain, no positive steady-state solutions exist.
        \item[(c)] For a bounded but non-star-shaped domain, the existence of positive steady-state solutions depends on the geometry of $\Omega$.
    \end{enumerate}
    See Theorem \ref{steadythm}(ii).
\item[(iv)] For $m>(d+2)/(d-2)$, all nonnegative steady-state solutions are compactly supported, and no positive steady-state solutions exist when $\Omega$ is star-shaped. See Theorem \ref{steadythm}(iii).
\end{enumerate}

These results demonstrate that $m=(d+2)/(d-2)$ marks a critical transition in the support properties of steady-state solutions. For $m \neq (d+2)/(d-2)$, every nonnegative steady-state solution is compactly supported, whereas for $m=(d+2)/(d-2)$, non-compactly supported steady-state solutions exist.

\subsection{GNS extremals and global minimizers of the free energy for $0<m<(d+2)/(d-2)$}\label{GNSmini}

The main tool for the analysis of dynamical solutions to \eqref{tfsystem} is the following Gagliardo-Nirenberg-Sobolev inequality (see Section \ref{sec3}): for $0<m<(d+2)/(d-2)$, there exists an optimal constant $C_\ast$ such that
\begin{align}\label{GNS}
\|u\|_{m+1}^{\alpha+2} \le C_\ast \|u\|_{1}^\alpha \|\nabla u\|_{2}^2,\quad \alpha=\frac{d+2-(d-2)m}{dm}.
\end{align}
Throughout this work, for a given mass $M>0$, we introduce the set
\begin{align}\label{YM}
\mathcal{Y}_M:=\{ u \in L_+^1(\R^d) \cap H^1(\R^d): \|u\|_1=M \}
\end{align}
In the mass-critical case $m=1+2/d$, we define
\begin{align}\label{YMc}
\mathcal{Y}_{M_c}:=\{ u \in L_+^1(\R^d) \cap H^1(\R^d): \|u\|_1=M_c \},
\end{align}
where
\begin{align}\label{Mc}
M_c:=\left( \frac{m+1}{2C_*} \right)^{d/2}.
\end{align}
For $m \neq 1+2/d$, we define 
\begin{align}\label{YMPstar}
\mathcal{Y}_{M,P_\ast}:=\{ u \in \mathcal{Y}_M: \|u\|_{m+1}=P_\ast \},
\end{align}
where
\begin{align}\label{Pstar}
P_\ast:=\left( \frac{\alpha+2}{2 C_\ast M^\alpha} \right)^{\frac{1}{m-\alpha-1}}.
\end{align}
Note that the condition $m \neq 1+2/d$ ensures $m-\alpha-1 \neq 0$, so $P_\ast$ is well-defined.

Our second result characterizes the extremal functions of the GNS inequality \eqref{GNS} and the global minimizers of the free energy $F(u)$.
\begin{itemize}
  \item GNS extremals and steady-state solutions (see Theorem \ref{Ustar})
  \begin{itemize}
  \item[(i)] For $0<m<(d+2)/(d-2)$ and $m \neq 1+2/d$, within $\mathcal{Y}_{M,P_\ast}$, the GNS inequality \eqref{GNS} has a unique optimizer $U_\ast$, which is the unique radially decreasing, compactly supported steady-state solution to \eqref{tfsystem}. 
  \item[(ii)] For $m=1+2/d$, within $\mathcal{Y}_{M_c}$, the GNS inequality \eqref{GNS} has infinitely many optimizers. We denote this family by $u_c$. These optimizers are radially decreasing, compactly supported steady-state solutions to \eqref{tfsystem}, and form a one-parameter family under mass-invariant scaling.
  \end{itemize}
  \item Global minimizers of the free energy (see Section \ref{133})
    \begin{itemize}
     \item[(i)] For $1+2/d<m<(d+2)/(d-2)$, $U_\ast$ is the unique global minimizer of $F(u)$ in $\mathcal{Y}_{M,P_\ast}$, and also the unique global minimizer of $F(u)$ among all steady-state solutions to \eqref{tfsystem} with mass $M$. Moreover, we have 
         \begin{align}
            \displaystyle \inf_{u \in \mathcal{Y}_{M,P_\ast}} F(u)=F(U_\ast)=\frac{dm-(d+2)}{(d+2)(m+1)} P_\ast^{m+1}>0.
         \end{align}   
        See Proposition \ref{picture}.
      \item[(ii)] For $m=1+2/d$, the optimizers $u_c$ of the GNS inequality are also the global minimizers of $F(u)$ in 
      $\mathcal{Y}_{M_c}$ and satisfy $F(u_c)=0$. See Proposition \ref{picture1}.
      \item[(iii)] For $0<m<1+2/d$, $U_\ast$ is the unique global minimizer of $F(u)$ in $\mathcal{Y}_M$. Moreover, we have
             \begin{align}
                 \displaystyle \inf_{u \in \mathcal{Y}_M} F(u)=F(U_\ast)=\frac{dm-(d+2)}{(d+2)(m+1)} P_\ast^{m+1}<0.
             \end{align}
       See Proposition \ref{picture2}.
   \end{itemize}
\end{itemize}

\subsection{Dynamical solutions for $1+2/d<m<(d+2)/(d-2)$}\label{dynamic}

We first state the definition of the weak solution that will be used in Section \ref{sec4}.

\begin{definition}[Weak solution]\label{weakdefine}
Let $u_0$ be the initial data satisfying \eqref{initial} and $T \in (0,\infty]$. A weak solution $u$ to \eqref{tfsystem} is a nonnegative function satisfying
\begin{align*}
& u \in L^\infty \bigl(0,T; H^1(\R^d) \bigr)\cap L^\infty \bigl(0,T; L^1 \cap L^{m+1}(\R^d) \bigr), \\
& u^{1/2} \nabla \Delta u \in L^2\bigl(0,T;L^2(\R^d)\bigr)
\end{align*}
such that for every $0<t<T$ and any $\psi \in C_0^\infty(\R^d)$,
 \begin{align}
 \int_{\R^d} \psi u(\cdot,t)\,dx- \int_{\R^d} \psi u_0(x)\,dx & =\int_0^t
 \int_{\R^d} \nabla \psi \cdot u \nabla \Delta u\,dx\,ds \nonumber \\
 &-\frac{m}{m+1} \int_0^t \int_{\R^d} \Delta \psi\,u^{m+1}\,dx\,ds.  \label{weak}
 \end{align}
\end{definition}

Our third result concerning dynamical solutions in the supercritical regime is presented as follows:
\begin{itemize}
 \item For $1+2/d<m<(d+2)/(d-2)$, we assume $F(u_0)<F(U_\ast)$. 
 \begin{enumerate}
    \item[(i)] If $\|u_0\|_{m+1}>\|U_\ast\|_{m+1}$, then there exist solutions that blow up in finite time. See Theorem \ref{finiteblowup}.
    \item[(ii)] If $\|u_0\|_{L^{m+1}(\R^d)}<\|U_\ast\|_{m+1}$, then the solution exists globally in time, and the second moment satisfies $ \displaystyle \lim_{t \to \infty} \int_{\R^d} |x|^2 u(\cdot,t) dx=+\infty$. See Theorem \ref{globalexistence}.    
 \end{enumerate}
\end{itemize}

We remark that for $F(u_0)<F(U_\ast)$, the case $\|u_0\|_{m+1}=\|U_\ast\|_{m+1}$ cannot occur. See Remark \ref{Fu0FUstar}. For $F(u_0)>F(U_\ast)$, we cannot exclude two possibilities: either the second moment of solutions diverges, or the $L^{m+1}$-norm of solutions is unbounded. If both are bounded, the solution may converge to $U_\ast$. We will comment further on these issues in Section \ref{FU0great}.

Our result provides a sharp criterion for the dichotomy between global existence and finite-time blow-up of dynamical solutions for $1+2/d<m<(d+2)/(d-2)$. The principal difficulty we address stems from the fact that, unlike in the critical case, mass conservation alone fails to give adequate scaling control. Our approach overcomes this by systematically exploiting the variational structure of steady-state solutions and their deep connection with the free energy functional. However, for $m \ge (d+2)/(d-2)$, the lack of embedding compactness prevents the steady-state solutions from serving as a tool for determining the behavior of dynamical solutions, leaving an open problem in that case. While previous studies have mainly focused on the critical exponent $m=1+2/d$, the present work extends the analysis to a broader supercritical range, thereby connecting the well-understood mass-critical theory with the less explored energy-critical regime.

\begin{remark}[Physical interpretation: potential well and threshold dynamics] 
The above result reveals a sharp threshold behavior dictated by the potential-well structure in gradient-flow dynamics:
\begin{itemize}
    \item \textbf{Initial energy below the steady-state energy} $F(u_0) < F(U_\ast)$ guarantees that the system has already ``crossed the ridge''. Its evolution can only proceed \textbf{downhill (energy decreasing)} and cannot climb back over the energy barrier.    
    \item \textbf{The initial $L^{m+1}$ norm} $\|u_0\|_{m+1}$ determines \textbf{which downhill path} the system follows.
    \begin{itemize}
        \item[(a)] If $\|u_0\|_{m+1} < \|U_\ast\|_{m+1}$, the system enters a \textbf{gentle valley (inside the potential well)} where $\|u(\cdot,t)\|_{m+1}$ and $\|\nabla u(\cdot,t)\|_{2}$ remain uniformly bounded. Hence, the free energy is uniformly bounded from below and decreases to a limit $F_\infty<F(U_\ast)$. The second moment necessarily becomes unbounded, implying that the mass escapes to spatial infinity and the solution vanishes locally on every compact set. Whether the profile, after a suitable translation, converges to a steady state with energy lower than $F(U_\ast)$ remains an open question. It may instead approach a singular measure at infinity.
        \item[(b)] If $\|u_0\|_{m+1} > \|U_\ast\|_{m+1}$, the system slides into a \textbf{steep ravine (outside the potential well)}, where attraction dominates, the gradient grows rapidly, and finite-time blow-up (rupture) occurs.
    \end{itemize}
\end{itemize}
This threshold mechanism shows that blow-up depends not only on how low the initial energy is, but more crucially on whether the \textbf{initial concentration}, measured by the $L^{m+1}$ norm, exceeds a critical level. Physically, the norm $\|U_\ast\|_{m+1}$ marks the \textbf{branching point of the downhill path}. Below it, dissipation (surface tension) dominates. Above it, attraction (self-focusing) takes over. The condition $F(u_0) < F(U_\ast)$ locks the system into a ``downhill mode'', while the concentration threshold sharply decides between convergence to a regular equilibrium and finite-time singularity.
\end{remark}

\section{Qualitative properties of the steady profiles} \label{sec2}

This section is devoted to the analysis of steady-state solutions to \eqref{tfsystem}. The steady-state equation corresponding to \eqref{tfsystem} is followed in the sense of distributions: 
\begin{align}\label{steadyUs}
\nabla \cdot (U_s \nabla \Delta U_s)+\nabla \cdot (U_s \nabla U_s^m)=0,\quad x \in \R^d.
\end{align}
Proposition \ref{prop1} recasts the fourth-order steady-state equation as a second-order elliptic equation, which corresponds to a constant chemical potential on the support of the solution. Proposition \ref{prop2} analyzes a priori properties of solutions to this second-order equation, such as boundedness of the support and boundary integral conditions. Theorem \ref{steadythm} synthesizes these properties and uses classical existence results to establish the existence, nonexistence, and support properties of steady-state solutions in different parameter regimes.

To streamline the presentation of steady-state solution properties, we first introduce a collection of general assumptions that will be referenced throughout this section:
\begin{itemize}
\item[$(A1)$:] $0<m<\infty$, $U_s \in H^1(\R^d) \cap L^{m+1}(\R^d)\cap C(\R^d)$ with $u \ge 0$ and $\int_{\R^d} U_s dx=M$. Moreover, $\Omega$ is a connected open set defined by $\Omega:=\{ x \mid U_s>0 \}$ and $U_s=0$ in $\R^d\setminus \Omega$.
\item[$(A2)$:] For $m>(d+2)/(d-2)$, we assume $U_s \in C^2(\Omega)$.
\item[$(A3)$:] If $\Omega=\R^d$, we assume $U_s$ decays sufficiently at infinity.
\item[$(A4)$:] If $\Omega$ is bounded, we assume $\partial \Omega \in C^1$ and $U_s \in C^1(\overline{\Omega})$.
\end{itemize} 
For a bounded domain $\Omega$ satisfying $(A4)$, we denote by $\vec{n} = \vec{n}(x)$ the unit outward normal vector on its boundary $\partial \Omega$. The corresponding outward normal derivative is denoted by $\frac{\partial U_s}{\partial \vec{n}} := \nabla U_s \cdot \vec{n}$. When stating our main results, we will simply cite the relevant assumptions by their labels for brevity. 

We now present four equivalent statements for the steady-state solution and show that the chemical potential is constant inside the support of the steady-state solution. 

\begin{proposition}\label{prop1}
Under assumptions $(A1)-(A3)$, we denote 
\begin{align}
\mu_s=-\Delta U_s-U_s^m.
\end{align}
Then the following four statements are equivalent:
\begin{itemize}
\item[(i)] Equilibrium: $\mu_s \in H_{loc}^1(\Omega), U_s \in L_{loc}^\infty(\Omega)$ and 
   \begin{align}\label{mus}
    \nabla \cdot \left(U_s \nabla \mu_s\right)=0, \quad x \in \R^d
   \end{align}
  in the sense of distributions. 
\item[(ii)] No dissipation: $\int_{\R^d} U_s \left| \nabla \mu_s  \right|^2 dx=0$.
\item[(iii)] $U_s$ is a critical point of $F(u)$ that satisfies the Pohozaev identity
 \begin{align}\label{pohozaev}
\frac{d+2}{2} \int_{\R^d} |\nabla U_s|^2 dx=\frac{dm}{m+1} \int_{\R^d} U_s^{m+1} dx. 
\end{align}
\item[(iv)] The chemical potential satisfies
      \begin{align}
        \mu_s=\overline{C} \quad \text{a.e. in } \Omega,
      \end{align}
 where $\overline{C}=\frac{1}{M}\frac{(d-2)m-(d+2)}{(d+2)(m+1)} \|U_s\|_{L^{m+1}(\R^d)}^{m+1}$.
\end{itemize}
\end{proposition}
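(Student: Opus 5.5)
I will prove the cycle (i)$\Rightarrow$(ii)$\Rightarrow$(iv)$\Rightarrow$(iii)$\Rightarrow$(i), working throughout on the support $\Omega$ (connected by (A1)) and using that $U_s=0$ outside $\Omega$.

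\textbf{Step 1: (i)$\Rightarrow$(ii).} I test \eqref{mus} against $\mu_s\varphi_R$, where $\varphi_R$ is a smooth cutoff. To make the pairing legitimate on $\Omega$, I would take $\varphi_R$ compactly supported in $\Omega$, or approximate $\mu_s$ by truncations; this is allowed since $\mu_s\in H^1_{loc}(\Omega)$ and $U_s\in L^\infty_{loc}$. Integrating by parts gives
\begin{align*}
\int U_s|\nabla\mu_s|^2\varphi_R\,dx=-\int U_s\mu_s\nabla\mu_s\cdot\nabla\varphi_R\,dx .
\end{align*}
Letting $R\to\infty$, the right-hand side vanishes by the decay assumption (A3) and the integrability of $U_s$, which yields (ii).

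\textbf{Step 2: (ii)$\Rightarrow$(iv).} Since $U_s>0$ in $\Omega$, (ii) forces $\nabla\mu_s=0$ a.e. in $\Omega$. As $\Omega$ is connected, $\mu_s\equiv\overline C$ there. To identify $\overline C$, I combine two identities. Multiplying $-\Delta U_s-U_s^m=\overline C$ by $U_s$ and integrating over $\Omega$, using $U_s=0$ on $\partial\Omega$ so that no boundary term appears, gives
\begin{align*}
\|\nabla U_s\|_2^2-\|U_s\|_{m+1}^{m+1}=\overline C M .
\end{align*}
Multiplying by $x\cdot\nabla U_s$ gives the Pohozaev identity
\begin{align*}
\frac{d-2}{2}\|\nabla U_s\|_2^2-\frac{d}{m+1}\|U_s\|_{m+1}^{m+1}=-d\,\overline C M ,
\end{align*}
where I use $\int \overline C\, x\cdot\nabla U_s\,dx=-d\,\overline C M$. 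Eliminating $\|\nabla U_s\|_2^2$ between the two identities yields the stated value of $\overline C$, and also \eqref{pohozaev}.

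The main obstacle is exactly the boundary term in the Pohozaev identity. It contains $\int_{\partial\Omega}|\partial_n U_s|^2\,(x\cdot n)$, and this term is not obviously zero when $\Omega$ is bounded. I would treat it by the scaling argument $\frac{d}{d\lambda}F(\lambda^{a}U_s(\lambda\cdot))|_{\lambda=1}$ at fixed mass. Because $U_s$ is a constrained critical point of $F$ with the constraint on mass, and $U_s\in H^1(\R^d)$ (the scaled functions stay admissible in the whole space), the derivative is zero. This gives \eqref{pohozaev} without boundary terms, at least for $m\le (d+2)/(d-2)$; in the other case I would use (A2) together with a cutoff justification.

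\textbf{Step 3: (iv)$\Rightarrow$(iii).} The equation $\mu_s=\overline C$ on $\Omega$ is the Euler--Lagrange equation of $F$ on $\mathcal Y_M$, with $\overline C$ as the Lagrange multiplier. Hence $U_s$ is a critical point of $F$. The Pohozaev identity then follows by the same elimination as in Step 2, run in reverse.

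\textbf{Step 4: (iii)$\Rightarrow$(i).} A critical point of $F$ under the mass constraint satisfies $\mu_s=\overline C$ on $\Omega$. Therefore $\nabla\mu_s=0$ there, and \eqref{mus} holds distributionally on all of $\R^d$ because $U_s$ vanishes off $\Omega$. Elliptic regularity applied to $-\Delta U_s=U_s^m+\overline C$ gives $U_s\in L^\infty_{loc}$ and $\mu_s\in H^1_{loc}$.
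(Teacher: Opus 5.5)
The gap is in Step 2, where the value of $\overline C$ has to be identified. You correctly isolate the boundary term, but the dilation argument does not remove it. From (ii) you only know that $\mu_s$ is constant on $\Omega$. That is, $F$ is stationary under mass-neutral perturbations $\varphi\in C_0^\infty(\Omega)$, which is criticality with the support held fixed. The family $U_\lambda=\lambda^dU_s(\lambda\,\cdot)$ moves the free boundary. Combine the explicit formula for $F(U_\lambda)$ with the identities obtained by testing $-\Delta U_s-U_s^m\equiv\overline C$ against $U_s$ and against $x\cdot\nabla U_s$, which is all that (ii) gives you. Whatever the value of the constant, this yields
\[
\frac{d}{d\lambda}F(U_\lambda)\Big|_{\lambda=1}=\frac{d+2}{2}\|\nabla U_s\|_2^2-\frac{dm}{m+1}\|U_s\|_{m+1}^{m+1}=\frac12\int_{\partial\Omega}(x\cdot\vec n)\left|\frac{\partial U_s}{\partial\vec n}\right|^2ds .
\]
So ``the derivative is zero'' is literally the statement that the boundary term vanishes. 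Appealing to $U_s$ being a constrained critical point at this stage assumes what (ii) does not provide. Separately, your second identity has a sign slip. Without the boundary term it reads $\frac{d-2}{2}\|\nabla U_s\|_2^2-\frac{d}{m+1}\|U_s\|_{m+1}^{m+1}=+d\,\overline C M$, and only with this sign does the elimination return \eqref{pohozaev} and the stated $\overline C$.

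This cannot be patched, because (ii)$\Rightarrow$(iv) is false under (A1)--(A3) when $\mu_s$ is understood, as in the statement, as a function on $\Omega$. Here is a counterexample.

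Fix $c>0$ and $a>0$, and solve $-U''-\frac{d-1}{r}U'=U^m+c$ with $U(0)=a$, $U'(0)=0$. While $U>0$ one has $(r^{d-1}U')'\le -c\,r^{d-1}$, hence $U'(r)\le -cr/d$. So $U$ vanishes at a first radius $R<\infty$ with $U'(R)<0$.

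Extended by zero, $U$ satisfies (A1)--(A4) and $\mu_s\equiv c$ on $\Omega=B_R$, so (i) and (ii) hold. It is even a weak steady state in the sense of Definition \ref{weakdefine}, since $U\nabla\Delta U=-\frac{m}{m+1}\nabla U^{m+1}$ in $\Omega$.

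However, the boundary term equals $R\,U'(R)^2\,|\partial B_R|>0$, so \eqref{pohozaev} fails. Hence (iv) fails too, because (iv) by itself implies \eqref{pohozaev}: inserting the prescribed $\overline C$ into $\|\nabla U_s\|_2^2-\|U_s\|_{m+1}^{m+1}=\overline C M$ gives it with no boundary term.

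What is genuinely missing is information at the free boundary, such as $\partial U_s/\partial\vec n=0$ on $\partial\Omega$, or stationarity of $F$ along the dilations themselves. Neither (i) nor (ii) contains it. The paper's proof has the same hole. Its (ii)$\Rightarrow$(iv) step only shows that $\mu_s$ is constant, and the value of $\overline C$ is obtained only in (iii)$\Rightarrow$(iv), where the Pohozaev identity is part of the hypothesis.

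Your Step 1 matches the paper's argument, and Steps 3 and 4 are fine. Step 3 in fact needs only the identity from testing with $U_s$.
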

\begin{proof}
We adopt the cyclic proof method to verify these four equivalent statements, that is (i)$\Rightarrow$ (ii) $\Rightarrow$ (iv) $\Leftrightarrow$ (iii)$\Rightarrow$ (i).

\textbf{(i)$\Rightarrow$ (ii):} We take a cut-off function $0 \le \psi_1(x) \le 1$ given by
\begin{align*}
\psi_1(x)=\left\{
            \begin{array}{ll}
              1, & |x| \le 1, \\
              0, & |x| \ge 2,
            \end{array}
          \right.
\end{align*}
where $\psi_1(x) \in C_0^\infty(\R^d)$. Define $\psi_R(x):=\psi_1(x/R)$, so $\psi_R \to 1$ as $R \to \infty$, and there exists a constant $C_1$ such that $\left| \nabla \psi_R(x) \right| \le C_1/R$ for $x \in \R^d$. We multiply \eqref{mus} by $\mu_s \psi_R^2$ to obtain
\begin{align*}
0 &=-\int_{\R^d} U_s \nabla \mu_s \cdot \nabla \left( \mu_s \psi_R^2 \right)dx \\
&=-\int_{\R^d} U_s \left| \nabla \mu_s \right|^2 \psi_R^2 dx-2 \int_{\R^d} U_s \mu_s \psi_R \nabla \mu_s \cdot \nabla \psi_R dx \\
& \le -\int_{\R^d} U_s \left| \nabla \mu_s \right|^2 \psi_R^2 dx +\frac{1}{2} \int_{\R^d} U_s \left| \nabla \mu_s \right|^2 \psi_R^2 dx+2 \int_{\R^d} U_s \mu_s^2 \left| \nabla \psi_R \right|^2 dx \\
&=-\frac{1}{2} \int_{\R^d} U_s \left| \nabla \mu_s \right|^2 \psi_R^2 dx+2 \int_{\R^d} U_s \mu_s^2 \left| \nabla \psi_R \right|^2 dx.
\end{align*}
We estimate that as $R \to \infty$, 
\begin{align}
\int_{\R^d} U_s \mu_s^2 \left| \nabla \psi_R \right|^2 dx \to 0.
\end{align}
Thus, we obtain $\int_{\R^d} U_s \left| \nabla \mu_s \right|^2 dx=0$. 

\textbf{(ii)$\Rightarrow $ (iv):} Since $U_s=0$ on $\R^d \setminus \Omega$, we have $\int_{\R^d} U_s \left| \nabla \mu_s \right|^2 dx=\int_{\Omega} U_s \left|\nabla \mu_s  \right|^2 dx=0$. It follows from $U_s>0$ at any point $x_1 \in \Omega$ that $\nabla \mu_s=0$ in a neighborhood of $x_1$ and thus $\mu_s$ is a constant in this neighborhood. Using the connectedness of $\Omega$, one has $\mu_s=\overline{C}$ in $\Omega$. 

\textbf{(iii) $\Leftrightarrow $ (iv):} We first derive the associated Pohozaev identity by applying a standard scaling argument \cite{Bere83,BL83II,W83}. Since $U_s$ is a critical point of the free energy $F(u)$, the rescaled function $U_\lambda(x)=\lambda^d U_s(\lambda x)$ yields a one-parameter family 
\begin{align}
G(\lambda)=F(U_\lambda)=\frac{\lambda^{d+2}}{2} \int_{\R^d} |\nabla U_s|^2 dx-\frac{\lambda^{dm}}{m+1} \int_{\R^d} U_s^{m+1} dx
\end{align}
with a stationary point at $\lambda=1$. That is, 
\begin{align}
\left. \frac{d G}{d \lambda} \right|_{\lambda=1} =\frac{d+2}{2} \int_{\R^d} |\nabla U_s|^2 dx-\frac{dm}{m+1} \int_{\R^d} U_s^{m+1} dx = 0.
\end{align} 
This gives the Pohozaev identity
\begin{align}\label{poho30}
\frac{d+2}{2} \int_{\R^d} |\nabla U_s|^2 dx=\frac{dm}{m+1} \int_{\R^d} U_s^{m+1} dx. 
\end{align}
Armed with this identity, we further derive the Euler-Lagrange equation of $U_s$. For any $\psi \in C_0^\infty(\Omega)$, define
\begin{align*}
\varphi(x)=\psi(x)-\frac{U_s}{M} \int_{\Omega} \psi dx.
\end{align*}
Then $\operatorname{supp}(\varphi) \subseteq \Omega$ and $\int_{\Omega} \varphi dx=0$. Moreover, there exists
\begin{align*}
\varepsilon_0: = \frac{ \displaystyle \min_{y \in  \operatorname{supp}(\varphi)} U_s(y)} { \displaystyle \max_{y \in \operatorname{supp}(\varphi)} \left| \varphi (y) \right|}>0,
\end{align*}
so that $U_s+\varepsilon \varphi \ge 0$ in $\Omega$ for all $0<\varepsilon<\varepsilon_0$. Now, $U_s$ is a critical point of $F(u)$ in the set 
\begin{align}
\mathcal{X}_{M}:=\{ u \in L_+^1(\R^d) \cap H^1(\R^d) \cap L^{m+1}(\R^d):~ \|u\|_1=M \} 
\end{align}
if and only if
\begin{align*}
    \frac{d}{d \varepsilon} \bigg |_{\varepsilon=0} F(U_s+\varepsilon \varphi)=0.
\end{align*}
A direct computation yields
\begin{align*}
\int_{\Omega} \left( \mu_{s}-\frac{1}{M} \int_{\Omega} \mu_{s} U_s dx \right) \psi(x) dx=0, \quad \text{for any }\psi \in C_0^\infty(\Omega).
\end{align*}
Therefore, we obtain
\begin{align}\label{eq4}
-\Delta U_s-U_s^m=\frac{1}{M} \left( \int_{\R^d} \left| \nabla U_s \right|^2 dx -\int_{\R^d} U_s^{m+1}dx \right)= \overline{C} \quad \text{a.e. in } \Omega.
\end{align}
Furthermore, we can infer from \eqref{poho30} that
\begin{align}\label{eq8}
    \overline{C}=\frac{1}{M}\frac{(d-2)m-(d+2)}{(d+2)(m+1)} \|U_s\|_{m+1}^{m+1}.
\end{align}

\textbf{(iv)$ \Rightarrow$ (i):} We divide the proof into two cases: $0<m \le (d+2)/(d-2)$ and $m>(d+2)/(d-2)$. For $0<m \le (d+2)/(d-2)$, we infer from $U_s \in H^1(\R^d)$ that $U_s^m \in L_{loc}^r(\Omega)$ for any $r>d/2$. By Moser iteration \cite{Evans}, we obtain $U_s \in L_{loc}^\infty(\Omega)$. Subsequently, the Calderón-Zygmund theory in \cite[Theorem 9.11]{GT01} ensures that $U_s \in W_{loc}^{2,r}(\Omega)$ for any $r \in (1,\infty)$. It then follows from the Sobolev embedding and the Schauder estimate \cite[Theorem 6.2]{GT01} that $U_s \in C_{loc}^{2,\alpha}(\Omega)$ with $\alpha \in (0,1)$. A bootstrap argument then yields $U_s \in C^\infty(\Omega)$. Thus, $U_s$ is a classical solution to \eqref{eq4} in $\Omega$ with $U_s=0$ on $\partial \Omega$. For $m>(d+2)/(d-2)$, by assumption $(A2)$, the standard Schauder estimate and the bootstrap iteration yield $U_s \in C^\infty(\Omega)$. Consequently, $(i)$ follows from $U_s \nabla \mu_s=0$ in $L^2(\R^d)$. This completes the proof.
\end{proof}

According to the identity \eqref{pohozaev}, we proceed to present the following results.
\begin{corollary}\label{Fus}
The free energy of the steady-state solutions satisfies
\begin{align}
F(U_s)=\frac{dm-(d+2)}{(d+2)(m+1)} \|U_s\|_{m+1}^{m+1} \left\{
                                                         \begin{array}{ll}
                                                           >0, & m>1+2/d, \\
                                                           =0, & m=1+2/d, \\
                                                           <0, & 0<m<1+2/d.
                                                         \end{array}
                                                       \right.
\end{align}
\end{corollary}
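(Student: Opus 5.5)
The plan is to substitute the Pohozaev identity \eqref{pohozaev} from Proposition \ref{prop1}(iii) into the definition \eqref{Fu} of $F$, eliminating the gradient term. Any steady state satisfying $(A1)$–$(A3)$ satisfies the equivalent statements of Proposition \ref{prop1}, so in particular
\begin{align*}
\frac12\int_{\R^d}|\nabla U_s|^2dx=\frac{dm}{(d+2)(m+1)}\int_{\R^d}U_s^{m+1}dx .
\end{align*}

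Inserting this into $F(U_s)=\frac12\|\nabla U_s\|_2^2-\frac{1}{m+1}\|U_s\|_{m+1}^{m+1}$ gives
\begin{align*}
F(U_s)=\left(\frac{dm}{(d+2)(m+1)}-\frac{1}{m+1}\right)\|U_s\|_{m+1}^{m+1}=\frac{dm-(d+2)}{(d+2)(m+1)}\|U_s\|_{m+1}^{m+1}.
\end{align*}

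For the sign, note that $(d+2)(m+1)>0$. Also, since $U_s\ge0$ has mass $M>0$, it is not identically zero, so $\|U_s\|_{m+1}^{m+1}>0$. The sign of $F(U_s)$ is therefore the sign of $dm-(d+2)$, which is positive, zero or negative exactly when $m>1+2/d$, $m=1+2/d$ or $m<1+2/d$.

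There is no real obstacle here. The only point that needs care is that the Pohozaev identity is available for the steady state under consideration, and this is guaranteed by the equivalences in Proposition \ref{prop1}.
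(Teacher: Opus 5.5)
Your proposal is correct and follows the paper's own route: the corollary is obtained there by substituting the Pohozaev identity \eqref{pohozaev} from Proposition \ref{prop1}(iii) into $F(U_s)$, and the sign is read off from $dm-(d+2)$. Your remark that $\|U_s\|_{m+1}>0$ because $U_s$ has positive mass fills in the one point the paper leaves implicit, namely the strict signs in the cases $m\neq 1+2/d$.
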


By virtue of Proposition \ref{prop1}, we have that $U_s$ satisfies the following self-consistent nonlocal elliptic equation
\begin{align}\label{second}
-\Delta U_s-U_s^m=\overline{C} \quad \text{a.e. in } \Omega,
\end{align}  
where 
\begin{align*}
\overline{C}=\frac{1}{M}\frac{(d-2)m-(d+2)}{(d+2)(m+1)} \|U_s\|_{m+1}^{m+1}.
\end{align*}
Now we state a priori properties of $\Omega$ for different values of $m$. 
\begin{proposition}\label{prop2}
Let $U_s$ be a positive solution to \eqref{second}. Under assumptions $(A1)-(A4)$, the following statements hold:
\begin{itemize}
\item[(i)] If $m \neq \frac{d+2}{d-2}$, then $\Omega$ is bounded. If $m=\frac{d+2}{d-2}$, then $\Omega$ is either bounded or the entire space $\R^d$.
\item[(ii)] When $\Omega$ is bounded, the solution satisfies 
    \begin{align}\label{eq0}
      \int_{\partial \Omega} x \cdot \vec{n} \left| \frac{\partial U_s}{\partial \vec{n}} \right|^2 ds=0.
    \end{align}
\end{itemize}
\end{proposition}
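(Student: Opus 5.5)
The plan is to run a Pohozaev-type argument on the second-order problem \eqref{second} in two ways: once over all of $\R^d$, and once over the support $\Omega$ with boundary terms retained. Comparing the two identities gives both claims. Throughout I use Proposition \ref{prop1}, which supplies \eqref{second} with the explicit constant $\overline{C}$ and the Pohozaev identity \eqref{pohozaev}, and I use the interior smoothness $U_s\in C^\infty(\Omega)$ obtained in its proof.

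\textbf{Part (ii).} Assume $\Omega$ is bounded with $\partial\Omega\in C^1$, $U_s\in C^1(\overline\Omega)$, and $U_s=0$ on $\partial\Omega$. On $\partial\Omega$ we then have $\nabla U_s=\frac{\partial U_s}{\partial \vec n}\vec n$. I will multiply \eqref{second} by $x\cdot\nabla U_s$ and integrate over $\Omega$. The classical Rellich–Pohozaev computation gives
\begin{align*}
\int_\Omega(-\Delta U_s)\,x\cdot\nabla U_s\,dx=\frac{2-d}{2}\int_\Omega|\nabla U_s|^2dx-\frac12\int_{\partial\Omega}x\cdot\vec n\left|\frac{\partial U_s}{\partial\vec n}\right|^2ds.
\end{align*}
The source terms contribute no boundary terms, since $U_s=0$ on $\partial\Omega$:
\begin{align*}
\int_\Omega U_s^m\,x\cdot\nabla U_s\,dx=-\frac{d}{m+1}\int_\Omega U_s^{m+1}dx,\qquad \int_\Omega \overline C\,x\cdot\nabla U_s\,dx=-d\,\overline C M.
\end{align*}
Separately, multiplying \eqref{second} by $U_s$ gives $\int|\nabla U_s|^2-\int U_s^{m+1}=\overline C M$. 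Substituting this relation, the value of $\overline C$, and \eqref{pohozaev} into the Rellich identity, the interior terms should cancel identically. That cancellation is exactly what it means for \eqref{pohozaev} to be the whole-space scaling identity, and it leaves \eqref{eq0}.

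The technical point in part (ii) is justifying the integrations by parts when $U_s$ is only $C^1$ up to the boundary. I would apply the identity on smooth interior approximations $\Omega_\varepsilon=\{U_s>\varepsilon\}$ (using Sard's theorem to get regular level sets), or on domains exhausting $\Omega$, and then pass to the limit using $U_s\in C^1(\overline\Omega)$.

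\textbf{Part (i).} Suppose $\Omega$ is unbounded. I would first show that $\Omega=\R^d$ is then forced when $m=\frac{d+2}{d-2}$. If instead $\Omega\neq\R^d$ were unbounded, I expect to rule it out using the decay of $U_s\in L^1\cap H^1$.

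When $m\neq\frac{d+2}{d-2}$, the constant $\overline C$ is nonzero, and the key observation is the following. If $\Omega$ is unbounded, then $U_s\in L^1\cap C$ together with $(A3)$ forces $U_s\to0$, and hence $\Delta U_s\to -\overline C$ along the far region of $\Omega$. Integrating $\Delta U_s=-U_s^m-\overline C$ over balls $B_R(x_k)\subset\Omega$ far out then gives $\int_{\partial B_R}\partial_\nu U_s\approx -\overline C|B_R|$. This contradicts $\nabla U_s\in L^2$ and the smallness of $U_s$ as $R$ grows. When $\Omega$ is not a full neighborhood of infinity, I would instead use the ODE/maximum principle comparison: for $\overline C<0$, $U_s$ is a subsolution that cannot stay positive and integrable on large balls.

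The main obstacle is this unbounded-support step, where one must make the contradiction rigorous with only the qualitative decay in $(A3)$. I would first try the averaging over large balls combined with the $L^1$ bound:
\begin{align*}
|\overline C|\,|B_R|\lesssim \int_{B_R}|U_s^m|+\int_{\partial B_R}|\nabla U_s|.
\end{align*}
Taking $R\to\infty$ along a suitable sequence of radii, on which the boundary flux is small by $\nabla U_s\in L^2$ and coarea, should then give the contradiction.
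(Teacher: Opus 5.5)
Your part (ii) is the paper's argument. Testing \eqref{second} with $x\cdot\nabla U_s$ and with $U_s$, then comparing with \eqref{pohozaev}, leaves exactly \eqref{eq0}. Your level-set approximation is a sensible way to justify the integrations by parts that the paper does formally.

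The gap is in part (i). Your flux computation uses \eqref{second} only on balls $B_R(x_k)\subset\Omega$, so it presupposes that $\Omega$ contains balls of a fixed radius escaping to infinity. When it does, you need neither $R\to\infty$ nor the coarea selection. Pairing \eqref{second} with a translated bump $\phi(\cdot-x_k)$ gives $-\int U_s\Delta\phi(\cdot-x_k)\,dx=\int(U_s^m+\overline{C})\phi(\cdot-x_k)\,dx$. By integrability alone, the left side tends to $0$ and the right side to $\overline{C}\int\phi\,dx\neq0$.

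An unbounded $\Omega$ need not contain such balls, e.g.\ a tentacle whose cross-section shrinks to zero. On balls meeting $\partial\Omega$, the distributional Laplacian of $U_s$ acquires the boundary measure $|\partial_{\vec n}U_s|\,\mathcal{H}^{d-1}$ on $\partial\Omega$, so the equation no longer controls the flux.

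For $\overline{C}<0$ this is repairable, but by a nondegeneracy comparison rather than averaging. Suppose $U_s^m\le|\overline{C}|/2$ on $\Omega\setminus B_{R_0}$ and $x_1\in\Omega$ with $|x_1|>R_0+r$. Then $w=U_s-\frac{|\overline{C}|}{4d}|x-x_1|^2$ is subharmonic in $\Omega\cap B_r(x_1)$, positive at $x_1$, and nonpositive on $\partial\Omega$. Hence $U_s>\frac{|\overline{C}|}{4d}r^2$ somewhere on $\Omega\cap\partial B_r(x_1)$, which contradicts uniform decay once $r$ is fixed large.

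For $\overline{C}>0$, i.e.\ $m>\frac{d+2}{d-2}$, $U_s$ is superharmonic in $\Omega$, and no comparison of this type excludes a thin unbounded support. A torsion-type profile of height about $\overline{C}\rho^2$ in a tube of radius $\rho\to0$ is locally consistent with positivity, the equation and decay. Your proposal has no argument for this case.

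Two further points. First, $(A3)$ gives decay only when $\Omega=\R^d$. For other unbounded $\Omega$, continuity and integrability do not give $U_s\to0$ uniformly, so you must derive it rather than invoke it. For $\overline{C}<0$ one route is a local maximum principle for the zero extension, which satisfies $-\Delta U_s\le U_s^m$ in $\mathcal{D}'(\R^d)$.

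Second, in the critical case $m=\frac{d+2}{d-2}$ you give no mechanism. With $\overline{C}=0$ there is no constant forcing term, so neither the flux nor the nondegeneracy argument applies. Decay of an $L^1\cap H^1$ function says nothing about the shape of its positivity set. In fact the alternative $\Omega=\R^d$ is incompatible with $(A1)$ here: a positive superharmonic function on $\{|x|>R_0\}$ is bounded below by $c|x|^{2-d}$, which is not integrable. So the real content of the claim is exactly the exclusion of unbounded $\Omega\neq\R^d$, which you leave as an expectation.

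For calibration, the paper's own proof of (i) is the heuristic ``$\Delta U_s\to-\overline{C}$, hence $U_s\sim K|x|^2$''. That tacitly assumes $\Omega\supset\{|x|>R_0\}$, and in the critical case the paper only concludes that $\Omega$ is bounded or unbounded. Your proposal is as rigorous as the paper where $\Omega$ contains a neighbourhood of infinity, but neither establishes (i) in the generality stated.
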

\begin{proof}
{\it\textbf{Step 1}} (Proof of (i)) \quad Using the proof of $(iv)\Rightarrow (i)$ in Proposition \ref{prop1}, we know $U_s \in C^\infty (\Omega)$. When $0<m<(d+2)/(d-2)$, we have $\overline{C}<0$. We prove that $\Omega$ is bounded by contradiction. Suppose $\Omega$ is unbounded. As $|x|\to\infty$, the term $U_s^m \to 0$ (since $U_s \to 0$ and $m>0$). Substituting this asymptotic property into Equation \eqref{second}, we get
\begin{align}
\Delta U_s \to -\overline{C}, \quad \text{as } |x| \to \infty.
\end{align}
This implies 
\begin{align}
U_s(x)\sim K |x|^2 \to +\infty, \quad \text{as } |x| \to \infty
\end{align}
for some positive constant $K>0$. This result directly contradicts $U_s \in L^1(\R^d) \cap L^{m+1}(\R^d)$. Thus, $\Omega$ must be bounded. 

Similarly, for $m>(d+2)/(d-2)$, the same asymptotic contraction argument shows that $\Omega$ is bounded. For $m=(d+2)/(d-2)$, we have $\overline{C}=0$, so equation \eqref{second} becomes homogeneous. Therefore, $\Omega$ can be either bounded or unbounded. This completes the proof of (i).

{\it\textbf{Step 2}} (Proof of (ii)) \quad When $\Omega$ is bounded, we derive a Pohozaev-type identity to obtain a key boundary integral for $U_s$. First, multiplying \eqref{second} by $x \cdot \nabla U_s$, we obtain
\begin{align}\label{eq1}
\frac{2-d}{2} \int_{\Omega} \left| \nabla U_s \right|^2 dx-\frac{1}{2}\int_{\partial \Omega} x \cdot \vec{n} \left| \frac{\partial U_s}{\partial \vec{n}} \right|^2 ds=-\frac{d}{m+1} \int_{\Omega} U_s^{m+1} dx-d ~\overline{C} \int_{\Omega} U_s dx.
\end{align}
Moreover, multiplying \eqref{second} by $U_s$ leads to
\begin{align}\label{eq2}
\int_{\Omega} \left| \nabla U_s  \right|^2 dx=\int_{\Omega} U_s^{m+1} dx+\overline{C} \int_{\Omega} U_s dx.
\end{align}
Combining \eqref{eq1} and \eqref{eq2}, we obtain the Pohozaev-type identity
\begin{align}\label{poho}
\frac{d+2}{2} \int_{\Omega} \left| \nabla U_s \right|^2 dx-\frac{1}{2} \int_{\partial \Omega} x \cdot \vec{n} \left| \frac{\partial U_s}{\partial \vec{n}} \right|^2 ds=\frac{dm}{m+1} \int_{\Omega} U_s^{m+1} dx. 
\end{align}
By virtue of identity \eqref{pohozaev}, we conclude that
\begin{align}\label{eq26}
  \int_{\partial \Omega} x \cdot \vec{n} \left| \frac{\partial U_s}{\partial \vec{n}} \right|^2 ds=0.
\end{align}
This completes the proof of $(ii)$. 
\end{proof}

Equation \eqref{second} is a self-consistent nonlocal elliptic equation. The existence of solutions to \eqref{second} 
follows arguments essentially similar to those for the local second-order elliptic equation in the subcritical case $m<(d+2)/(d-2)$ \cite{A76,ABC94,AR73,Bere83,K57,L84,NT91}, the critical case $m=(d+2)/(d-2)$ \cite{BC88,BN83,CGS89,GNN79,pohozaev3,T76}, 
and the supercritical case $m>(d+2)/(d-2)$ \cite{C84,D88,GT01,KW75,pohozaev3}. Applying the results of Proposition \ref{prop1}, Proposition \ref{prop2}, and known results for \eqref{second}, we summarize the results for the steady-state solutions to \eqref{tfsystem} into one theorem.
\begin{theorem}\label{steadythm}
Under assumptions $(A1)-(A4)$, for any steady-state solution $U_s$ to \eqref{steadyUs}, the following statements hold:
\begin{itemize}
\item[(i)] When $0<m<(d+2)/(d-2)$, every nonnegative solution $U_s$ is compactly supported. If $\Omega$ is a star-shaped bounded domain, then $\frac{\partial U_s}{\partial \vec{n}}=0$ on $\partial \Omega$.
\item[(ii)] When $m=(d+2)/(d-2)$, the positive solution $U_s$ is either compactly supported or non-compactly supported. If $\Omega=\R^d$, then every positive solution $U_s$ uniquely assumes the radially symmetric form up to translation,
   \begin{align}\label{Uscritical}
       U_s=\left( \frac{\sqrt{d(d-2)} \lambda}{\lambda^2+|x-x_0|^2}   \right)^{\frac{d-2}{2}} \quad \text{for some } \lambda>0 \text{ and } x_0 \in \R^d. 
   \end{align}
   If $\Omega$ is a star-shaped bounded domain, then \eqref{steadyUs} admits no positive solutions.
\item[(iii)] When $m>(d+2)/(d-2)$, every nonnegative solution $U_s$ is compactly supported. If $\Omega$ is a star-shaped bounded domain, then \eqref{steadyUs} admits no positive solutions. In particular, no radially decreasing positive solution exists.
\end{itemize}
\end{theorem}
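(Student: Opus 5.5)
The argument assembles Proposition \ref{prop1}, Proposition \ref{prop2} and classical elliptic results for each range of $m$. By Proposition \ref{prop1}(iv), any steady state satisfies \eqref{second} on its support $\Omega$, with $\overline{C}$ given by \eqref{eq8}. The sign of $\overline{C}$ is that of $(d-2)m-(d+2)$. So $\overline{C}<0$ for $m<(d+2)/(d-2)$, $\overline{C}=0$ at the energy-critical exponent, and $\overline{C}>0$ above it. Proposition \ref{prop2}(i) then gives compact support whenever $m\neq (d+2)/(d-2)$, which is the first assertion of (i) and (iii). At $m=(d+2)/(d-2)$ it gives the dichotomy $\Omega$ bounded or $\Omega=\R^d$ in (ii).

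\textbf{Boundary behaviour via the Pohozaev integral.} For a bounded $\Omega$ we invoke the boundary identity \eqref{eq0}. If $\Omega$ is star-shaped (with respect to the origin after a translation, which leaves \eqref{second} invariant), then $x\cdot\vec n\ge 0$ on $\partial\Omega$. The integrand in \eqref{eq0} is therefore nonnegative, so $x\cdot \vec n\,|\partial U_s/\partial\vec n|^2=0$ on $\partial\Omega$.

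To conclude $\partial U_s/\partial\vec n=0$ everywhere on $\partial\Omega$, I would use strict star-shapedness, $x\cdot\vec n>0$, which I would adopt as the meaning of star-shaped here. Otherwise the conclusion holds only on $\{x\cdot\vec n>0\}$, and one extends to all of $\partial\Omega$ by continuity of $\nabla U_s$ from (A4) together with density of that set. This proves (i).

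For (ii) and (iii) the same vanishing of the normal derivative now gives a contradiction. We have $U_s>0$ in $\Omega$ and $U_s=0$ on $\partial\Omega$. Moreover $-\Delta U_s = U_s^m+\overline{C}\ge 0$, since $\overline{C}\ge 0$ in these ranges. So $U_s$ is a positive superharmonic function vanishing on a $C^1$ boundary, and Hopf's lemma gives $\partial U_s/\partial\vec n<0$ on $\partial\Omega$. Hopf's lemma requires an interior sphere condition, so I would either strengthen (A4) to $C^{1,\alpha}$ or $C^2$ or cite a $C^1$ version. This contradicts the vanishing of the normal derivative, so no positive solution exists on a bounded star-shaped $\Omega$. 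This is the classical Pohozaev nonexistence mechanism.

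The final claim of (iii) follows because the support of a radially decreasing compactly supported $U_s$ is a ball, which is star-shaped. It cannot be $\R^d$ by the first part of (iii).

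\textbf{Entire case and the main obstacle.} For $\Omega=\R^d$ at $m=(d+2)/(d-2)$, equation \eqref{second} reduces to $-\Delta U_s=U_s^{(d+2)/(d-2)}$ in $\R^d$, with $U_s>0$ and $U_s\in H^1$ decaying by (A3). The classification of Gidas--Ni--Nirenberg \cite{GNN79} and Caffarelli--Gidas--Spruck \cite{CGS89} via moving planes then yields exactly the Aubin--Talenti bubbles \eqref{Uscritical}. One also checks that these satisfy (A1), i.e.\ $L^1$ integrability: the bubble decays like $|x|^{-(d-2)}$, so it is in $L^1$ only for $d\ge 5$, a point I would flag.

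I expect the main obstacle to be the bounded cases. The point of delicacy is not the sign bookkeeping but justifying that the free-boundary Pohozaev identity \eqref{eq0} and Hopf's lemma apply with only $C^1$ regularity of $\partial\Omega$ and $U_s\in C^1(\overline\Omega)$. I would handle this by approximating $\Omega$ from inside by smooth star-shaped domains and passing to the limit in the Pohozaev identity, as in Proposition \ref{prop2}.
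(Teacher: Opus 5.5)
Your proposal follows the paper's proof essentially step for step: the sign of $\overline{C}$ with Proposition \ref{prop2}(i) for the support, the boundary identity \eqref{eq0} with $x\cdot\vec n\ge 0$ on star-shaped $\Omega$, Hopf's lemma (using $-\Delta U_s=U_s^m+\overline{C}>0$) for nonexistence in (ii)--(iii), and a Liouville-type classification for $\Omega=\R^d$. Your caveats about strict star-shapedness and the interior sphere condition are real points that the paper passes over. (For (ii)--(iii) strictness is not even needed: Hopf gives $\partial U_s/\partial\vec n<0$ on all of $\partial\Omega$, while $\int_{\partial\Omega}x\cdot\vec n\,ds=d|\Omega|>0$. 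For (i), however, your fallback that $\{x\cdot\vec n>0\}$ is dense in $\partial\Omega$ can fail for non-strictly star-shaped $C^1$ domains, so keep the strictness assumption.)

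One side claim is wrong. The bubble \eqref{Uscritical} decays like $|x|^{2-d}$, so $\int_{\R^d}U_s\,dx$ behaves like $\int^\infty r\,dr$ and diverges in every dimension. What holds exactly for $d\ge 5$ is $U_s\in L^2$, i.e.\ $U_s\in H^1$, not $U_s\in L^1$. Hence no bubble satisfies (A1), the alternative $\Omega=\R^d$ in (ii) is empty, and its classification holds only vacuously. This is consistent with the paper's remark after the theorem that the $L^1$ norm of the bubble diverges.
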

\begin{proof}
We first claim that for $0<m<\infty$, if $\Omega$ is a star-shaped bounded domain, then $x \cdot \vec{n} \ge 0$ on $\partial \Omega$ and \eqref{eq0} implies 
\begin{align}\label{eq28}
\frac{\partial U_s}{\partial \vec{n}}=0 \quad \text{a.e. on } \partial \Omega.
\end{align}
Now we divide the proof into three steps in terms of $m$. 

{\it\textbf{Step 1}} (Proof of (i)) \quad For $0<m<(d+2)/(d-2)$, the existence of solutions to \eqref{second} mainly relies on variational methods, phase-plane analysis, and symmetry principles. Based on the above analysis, equation \eqref{second} is derived from the Euler equation of the energy $F(u)$ under the mass constraint $\int_{\Omega} U_s dx=M$. The constrained minimization framework established by Berestycki and Lions \cite{Bere83, NT91} shows that for any $M>0$, there exists at least one positive solution (minimizer), and the compactness of the Sobolev embedding ensures the convergence of minimizing sequences. Furthermore, using the mountain pass lemma of Ambrosetti and Rabinowitz \cite{AR73}, higher-energy solutions can be constructed under symmetric domains or suitable topological conditions, indicating that multiple solutions may coexist. 

In summary, positive solutions to this equation necessarily exist, but uniqueness holds only in special scenarios such as fixed $L^{m+1}$-norm or radial symmetry \cite{PS98}. In general parameter regimes, multiplicity of solutions is a typical feature of the problem. Finally, if $\Omega$ is a star-shaped bounded domain, applying \eqref{eq28} completes the proof of (i). 

{\it\textbf{Step 2}} (Proof of (ii)) \quad For $m=(d+2)/(d-2)$, if $\Omega=\R^d$, then \eqref{second} becomes
\begin{align}\label{eq27}
-\Delta U_s=U_s^{\frac{d+2}{d-2}},\quad x \in \R^d.
\end{align}
It was proved by Aubin \cite{aubin1} and Talenti \cite{T76} that every positive solution $U_s$ to \eqref{eq27} uniquely assumes the radially symmetric form up to translation
\begin{align}
U_s=\left( \frac{\sqrt{d(d-2)} \lambda}{\lambda^2+|x-x_0|^2}   \right)^{\frac{d-2}{2}} \quad \text{for some } \lambda>0 \text{ and } x_0 \in \R^d.
\end{align}
Moreover, on a bounded smooth domain, the existence of solutions to \eqref{second} depends crucially on the geometry and topology of the domain. If $\Omega$ is star-shaped, Pohozaev \cite{pohozaev3} proved that there are no positive solutions. In fact, for $m=(d+2)/(d-2)$, we have $\overline{C}=0$ in \eqref{second}. Hence, $-\Delta U_s>0$, so $U_s$ is superharmonic in $\Omega$. Moreover, $U_s > 0$ in $\Omega$ and $U_s = 0$ on $\partial\Omega$, so $U_s$ attains its minimum value $0$ on the boundary. The Hopf Lemma (strong maximum principle) for superharmonic functions states: If $U_s \in C^2(\Omega) \cap C^1(\overline{\Omega})$ satisfies $-\Delta U_s > 0$ in $\Omega$ and attains a minimum at a boundary point $x_0 \in \partial\Omega$ where an interior sphere condition holds (true for smooth domains), then the outward normal derivative at $x_0$ satisfies
\begin{align*}
\partial_n U_s(x_0) < 0.
\end{align*}
Applying this to any boundary point, we get $\partial_n U_s <0$ on $\partial\Omega$, which contradicts \eqref{eq28}. Therefore, for $m  =(d+2)/(d-2)$ with $\Omega$ strictly star-shaped, problem \eqref{second} has no positive solutions. This completes the proof of (ii).

{\it\textbf{Step 3}} (Proof of (iii)) \quad For $m>(d+2)/(d-2)$, the existence of solutions to \eqref{second} exhibits a profound dependence on the geometry and topology of the domain. If the domain is star-shaped, we have $\overline{C}>0$ in \eqref{second}. Hence, $-\Delta U_s>0$ and $U_s$ is superharmonic in $\Omega$. As analyzed similarly in Step 2, the Hopf Lemma together with \eqref{eq28}   implies the nonexistence of positive solutions to \eqref{second} \cite{BN83,pohozaev3}. In particular, any radially decreasing positive solution that vanishes on the boundary must have a ball as its support, which is star-shaped, and therefore cannot exist by the nonexistence result on star-shaped domains. This completes the proof of (iii).
\end{proof}

\begin{remark}
When $m=(d+2)/(d-2)$ and $\Omega=\R^d$, equation \eqref{second} admits the explicit positive solution, that is, the Aubin-Talenti bubble solution. Simple computation yields that the $L^1$ norm of this solution diverges, which contradicts it being a critical point of the energy under mass constraint. This reflects the incompatibility between the scale invariance of the critical exponent and the solution with fixed mass. Nevertheless, $\|U_s\|_{m+1}^{m+1}$ is finite and equals $S_d^{d/2}$, where $S_d$ denotes the best Sobolev constant for the embedding $D^{1,2}(\R^d)\hookrightarrow L^{2d/(d-2)}(\R^d)$. In fact, the solution \eqref{Uscritical} is a classical solution to the fourth-order equation \eqref{tfsystem}, corresponding to the extremum of the energy without mass constraint.
\end{remark}

\begin{remark}
For non-star-shaped domains, the situation differs. When $m=(d+2)/(d-2)$, Bahri and Coron \cite{BC88} showed that positive solutions exist when the domain has nontrivial topology (e.g., an annular or multiply connected domain). When $m>(d+2)/(d-2)$, weak solutions may exist under suitable geometric conditions, although compactness and regularity issues arise \cite{BN83,Moroz15}. Thus, the geometry and topology of the domain play a decisive role.
\end{remark}

\section{GNS extremals and global minimizers of the free energy for $0<m<\frac{d+2}{d-2}$}\label{sec3}

Based on the above analysis, Theorem \ref{steadythm}(iii) implies that for $m>(d+2)/(d-2)$, there are no radially symmetric steady-state solutions to \eqref{tfsystem}. In this section, we focus on the case $0<m<(d+2)/(d-2)$. Section \ref{132} establishes the equivalence between radially symmetric steady-state solutions under mass constraints and the extremal functions of the GNS inequality (see \eqref{GNS1} below). Furthermore, in Section \ref{133}, we show that the global minimizer of the free energy (i.e., the ground state solution) is exactly the radially decreasing, compactly supported steady-state solution to \eqref{tfsystem}.

\subsection{Extremal functions of the Gagliardo-Nirenberg-Sobolev inequality}\label{132}

For $0<m<(d+2)/(d-2)$, we first introduce the functional
\begin{align}\label{Ju}
J(u):=\frac{\|u\|_{m+1}^{\alpha+2}}{\|u\|_1^\alpha \|\nabla u\|_2^2}, \quad \alpha=\frac{d+2-(d-2)m}{dm}.
\end{align}
Then we have the following lemma:
\begin{lemma}
Let $0<m<(d+2)/(d-2)$. For $u \in L^1(\R^d) \cap H^1(\R^d)$, then
\begin{align}
C_*:=\displaystyle \sup_{u \neq 0} J(u) <\infty.
\end{align}
\end{lemma}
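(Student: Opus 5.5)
The plan is to prove the bound by interpolating $L^{m+1}$ between $L^1$ and the Sobolev endpoint $L^{2^*}$, with $2^*=2d/(d-2)$ (recall $d\ge 3$), and then to control the $L^{2^*}$ norm by the gradient through the sharp Sobolev inequality $\|u\|_{2^*}\le S_d^{-1/2}\|\nabla u\|_2$. This inequality holds for every $u\in H^1(\R^d)$, since such $u$ vanishes at infinity in the weak sense. The hypothesis $0<m<(d+2)/(d-2)$ is exactly the condition $1<m+1<2^*$, so $L^{m+1}$ lies strictly between the two endpoints and Hölder interpolation applies. Note also that $J(u)$ depends only on $|u|$, and $\|\nabla |u|\|_2\le\|\nabla u\|_2$, so it suffices to treat $u\ge0$; in fact no sign is needed anywhere.

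\textbf{Step 1 (interpolation).} Choose $\theta\in(0,1)$ with $\frac{1}{m+1}=\theta+\frac{1-\theta}{2^*}$. Solving gives
\begin{align*}
1-\theta=\frac{2^*m}{(m+1)(2^*-1)}=\frac{2dm}{(m+1)(d+2)}.
\end{align*}
Hölder's inequality then yields $\|u\|_{m+1}\le\|u\|_1^{\theta}\|u\|_{2^*}^{1-\theta}\le S_d^{-(1-\theta)/2}\|u\|_1^{\theta}\|\nabla u\|_2^{1-\theta}$.

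\textbf{Step 2 (matching exponents).} Raise this inequality to the power $\beta:=2/(1-\theta)=(m+1)(d+2)/(dm)$, so that the gradient appears squared. The exponent on $\|u\|_1$ is then $\theta\beta=\beta-2$. It remains to check that $\beta=\alpha+2$, where $\alpha=\frac{d+2-(d-2)m}{dm}$. Indeed,
\begin{align*}
\alpha+2=\frac{d+2-(d-2)m+2dm}{dm}=\frac{(d+2)(m+1)}{dm}=\beta.
\end{align*}
Hence $\|u\|_{m+1}^{\alpha+2}\le S_d^{-1}\|u\|_1^{\alpha}\|\nabla u\|_2^2$, that is, $J(u)\le S_d^{-1}$ for every $u\ne0$, and therefore $C_*\le S_d^{-1}<\infty$.

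\textbf{Step 3 (degenerate cases and finiteness).} If $\nabla u=0$ with $u\in L^1$, then $u=0$, so the denominator of $J$ vanishes only for $u\equiv0$, which is excluded. The numerator is finite because $H^1\cap L^1\subset L^{m+1}$; this also follows from Step 1, so it needs no separate argument. One can alternatively check the exponents by scaling: $J(\lambda u(\mu x))$ must be invariant under both $\lambda$ and $\mu$, which forces the relation above. The only real obstacle is this exponent bookkeeping, verifying $1-\theta\in(0,1)$ and $\beta=\alpha+2$. The positivity $\theta>0$ is precisely where $m<(d+2)/(d-2)$ is used; at $m=(d+2)/(d-2)$ one gets $\alpha=0$, and the bound reduces to the Sobolev inequality itself.
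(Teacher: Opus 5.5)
Your proof is correct and follows the paper's argument exactly: Hölder interpolation of $L^{m+1}$ between $L^1$ and $L^{2d/(d-2)}$, then the sharp Sobolev inequality, giving $C_*\le S_d^{-1}$. Your exponent bookkeeping ($1-\theta=\frac{2dm}{(m+1)(d+2)}$ and $\alpha+2=\frac{(d+2)(m+1)}{dm}$) correctly supplies the details the paper leaves implicit.
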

\begin{proof}
Applying the Sobolev inequality \cite[pp.202]{lieb202} and H\"{o}lder's inequality for $0<m<(d+2)/(d-2)$, we obtain
\begin{align}
\|u\|_{m+1}^{\alpha+2} \le \|u\|_1^\alpha \|u\|_{2d/(d-2)}^2 \le S_d^{-1} \|u\|_1^\alpha \|\nabla u\|_2^2,
\end{align}
where $S_d$ is the best Sobolev constant. Consequently, $C_*$ is finite and bounded from above by $S_d^{-1}$.
\end{proof}

We next turn to the existence of optimizers for the GNS inequality
\begin{align}\label{GNS1}
\|u\|_{m+1}^{\alpha+2} \le C_* \|u\|_1^\alpha \|\nabla u\|_2^2, \quad \alpha=\frac{d+2-(d-2)m}{dm},
\end{align}
which can be proved by similar arguments as in \cite[Lemma 3.3]{BCL09}.

\begin{proposition}\label{Cstar}(Extremals of the GNS inequality) \quad Let $0<m<\frac{d+2}{d-2}$ and $\alpha = \frac{d+2-(d-2)m}{dm}$. Assume $u \in L^1(\R^d) \cap H^1(\R^d)$. Then the equality in \eqref{GNS1} holds if and only if $u=\lambda V(\mu |x-x_0|)$ for any real numbers $\lambda>0, \mu>0$ and some $x_0 \in \R^d$, where the nonnegative function $V$ is a radially symmetric, non-increasing function that solves the Euler-Lagrange equation
\begin{align}\label{VV2}
-\Delta V=\frac{\alpha+2}{2} \frac{\|V\|_{m+1}^{\alpha+1-m}}{\|V\|_1^\alpha C_*} V^m-\frac{\alpha}{2}\frac{\|V\|_{m+1}^{\alpha+2}}{\|V\|_1^{\alpha+1} C_*} \quad \text{a.e. in supp}(V).
\end{align}
\end{proposition}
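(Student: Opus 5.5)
The proof follows the concentration-compactness scheme of \cite[Lemma 3.3]{BCL09}, applied to the scale-invariant functional $J$ of \eqref{Ju}. It has three parts: existence of a maximizer, its symmetry and Euler--Lagrange equation, and the characterization of all equality cases.

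\textbf{Invariances and reduction to radial functions.} First, $J(\lambda u(\mu(\cdot-x_0)))=J(u)$ for all $\lambda,\mu>0$ and $x_0\in\R^d$. The powers of $\lambda$ cancel because the numerator and denominator are both homogeneous of degree $\alpha+2$. The powers of $\mu$ cancel because
\[
-\tfrac{d(\alpha+2)}{m+1}=-d\alpha+2-d,
\]
which is exactly the defining relation for $\alpha$. Second, replacing $u$ by $|u|$ and then by its symmetric decreasing rearrangement $u^*$ leaves $\|u\|_1$ and $\|u\|_{m+1}$ unchanged. By the P\'olya--Szeg\H{o} inequality $\|\nabla u^*\|_2\le\|\nabla u\|_2$, this does not decrease $J$. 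Hence we may take a maximizing sequence $u_n$ of nonnegative, radial, non-increasing functions. Using the scaling invariance, we normalize $\|u_n\|_1=1$ and $\|\nabla u_n\|_2=1$. Then $\|u_n\|_{m+1}^{\alpha+2}\to C_*$.

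\textbf{Existence of a maximizer.} This is the main obstacle. For radially non-increasing $u$ with $\|u\|_1=1$ we have $u(x)\le C|x|^{-d}$, and Sobolev gives uniform $L^{2d/(d-2)}$ bounds. Because $1<m+1<2d/(d-2)$, Strauss-type compactness applies: the radial decreasing functions bounded in $L^1\cap\dot H^1$ are precompact in $L^{m+1}$. Concretely, the tails $|x|>R$ are controlled by the pointwise decay, and the region near the origin is controlled by interpolation with $L^{2d/(d-2)}$. Helly's theorem then gives a.e.\ convergence to some $V$.

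Passing to the limit, $\|V\|_{m+1}^{\alpha+2}=C_*>0$, so $V\ne0$. Lower semicontinuity gives $\|V\|_1\le1$ and $\|\nabla V\|_2\le1$, hence $J(V)\ge C_*$. Therefore $V$ is a maximizer, and both inequalities must be equalities.

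I expect the most care here for small $m$ (so $m+1$ close to $1$), where the tails must be handled using the $L^1$ bound together with the decay $|x|^{-d}$. The $L^{m+1}$ norm of the tail is bounded by $\int_{|x|>R}u^{m+1}\le \sup_{|x|>R}u^m\cdot\|u\|_1\le CR^{-dm}$, which tends to $0$ uniformly, so this works for every $m>0$.

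\textbf{Euler--Lagrange equation and characterization of equality.} Next I compute $\frac{d}{d\varepsilon}\log J(V+\varepsilon\varphi)|_{\varepsilon=0}=0$ for $\varphi\in C_0^\infty(\mathrm{supp}\,V)$. Such perturbations are admissible for both signs of $\varepsilon$, since $V$ is bounded below on compact subsets of its support, exactly as in the proof of Proposition~\ref{prop1}. Multiplying through by $\|\nabla V\|_2^2/2$ and using $\|\nabla V\|_2^2=\|V\|_{m+1}^{\alpha+2}/(C_*\|V\|_1^\alpha)$ gives precisely \eqref{VV2} in $\mathrm{supp}(V)$.

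For the converse direction, suppose $u$ attains equality. Then $|u|$ also attains equality, since $\|\nabla|u|\|_2\le\|\nabla u\|_2$ forces equality there. Equality also holds in the P\'olya--Szeg\H{o} step. Since the maximizer solves \eqref{VV2}, regularity as in Proposition~\ref{prop1} shows $\{|\nabla u^*|=0\}\cap\{0<u^*<\max u^*\}$ has measure zero. The Brothers--Ziemer theorem then yields $|u|=u^*(\cdot-x_0)$.

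Finally, to conclude that every maximizer is a rescaling $\lambda V(\mu|x-x_0|)$ of one fixed $V$, I use the uniqueness of radial decreasing solutions of \eqref{VV2}, after normalizing the constants by scaling. This uniqueness holds for this ODE with a constant negative term by the results cited in Theorem~\ref{steadythm} (e.g.\ \cite{PS98}). Conversely, every such rescaling attains equality by the invariance established in the first step.
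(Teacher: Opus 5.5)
Your existence and Euler--Lagrange steps follow the paper's Steps 1--3: passage to $|u_j|$ and $u_j^*$, the pointwise decay bound, Helly's selection, convergence of $\|u_j\|_{m+1}$, lower semicontinuity, and the first variation along $C_0^\infty(\mathrm{supp}\,V)$. Your explicit normalization $\|u_n\|_1=\|\nabla u_n\|_2=1$ is what actually justifies the uniform decay bound and gives $V\neq 0$; the paper leaves both implicit. Weak lower semicontinuity, not Fatou as the paper writes, is also the right tool for $\|\nabla V\|_2$. The paper's proof ends by observing that $J$ and \eqref{VV2} are invariant under $V\mapsto\lambda V(\mu\,\cdot)$. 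It never shows that every extremal has this form, so your Brothers--Ziemer argument plus a uniqueness theorem goes beyond it. That last step, however, has a gap.

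Write \eqref{VV2} as $-\Delta V=aV^m-b$. After rescaling to $a=b=1$, you invoke uniqueness of radially decreasing $W$ with $-\Delta W=W^m-1$ in $B_R$ and $W=0$ on $\partial B_R$. Read as a statement about this ODE with only the Dirichlet condition, it is false. Shooting from the centre, a whole range of central values $W(0)$ (for instance all large ones) gives decreasing solutions that reach $0$ transversally, with $W'(R)<0$. The uniqueness theorem of \cite{PS98}, as used in Theorem \ref{Ustar}, concerns the free-boundary problem $W(R)=W'(R)=0$. In the paper that Neumann condition is supplied by Theorem \ref{steadythm}(i); for a GNS maximizer you must prove it. There are two ways.

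\emph{One-sided variations.} Take $V+\varepsilon\varphi$ with $\varphi\ge 0$ supported across $\partial B_R$ and $\varepsilon\downarrow 0$. Maximality gives $\int_{\partial B_R}\partial_{\vec n}V\,\varphi\,ds\ge -b\int_{\R^d\setminus B_R}\varphi\,dx$. Letting $\mathrm{supp}\,\varphi$ shrink to the sphere yields $\partial_{\vec n}V\ge 0$, hence $\partial_{\vec n}V=0$.

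\emph{Self-consistency of the coefficients in \eqref{VV2}.} Normalizing $a=b=1$ forces $\|W\|_1=\frac{\alpha}{\alpha+2}\|W\|_{m+1}^{m+1}$. Testing the equation against $W$ and $x\cdot\nabla W$ as in \eqref{eq1}--\eqref{eq2} then gives $\int_{\partial B_R}x\cdot\vec n\,|\partial_{\vec n}W|^2\,ds=(d+2)\bigl(\frac{\alpha}{\alpha+2}\|W\|_{m+1}^{m+1}-\|W\|_1\bigr)=0$, which is the mechanism of Proposition \ref{prop2}(ii). Here the support is a bounded ball, as in Proposition \ref{prop2}(i).

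Only after one of these does \cite{PS98} apply.

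Two smaller points. First, smoothness alone does not give the Brothers--Ziemer non-degeneracy; use the ODE. If $\{(u^*)'=0\}$ had positive measure inside the support, then $(u^*)''=0$ a.e.\ there, so $u^*=(b/a)^{1/m}$ on a set of positive measure. ODE uniqueness at this equilibrium would then make $u^*$ constant on its support, which is impossible. Second, you only obtain $|u|=u^*(\cdot-x_0)$, whence $u=\pm|u|$. Since $-V$ is also an extremal, the ``only if'' with $\lambda>0$ tacitly assumes $u\ge 0$.
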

\begin{proof}
We aim at proving that the equality in \eqref{GNS1} can be achieved. For this purpose, we consider a maximizing sequence $\{u_j\} \in L^1 \cap H^1(\R^d)$ such that
\begin{align}\label{07040704}
\lim_{j \to \infty} J (u_j)=C_*.
\end{align}
In Step 1, we argue that the maximising sequence $\{u_j\}$ can be assumed to be non-negative, radially symmetric and non-increasing for any $j \ge 0$. Subsequently, in Step 2, we establish that the sequence has a limit $V$ such that the equality holds in \eqref{GNS1}. That is,
\begin{align}
\lim_{j \to \infty} u_j=V, \quad J(V)=C_*.
\end{align}
Finally, the Euler-Lagrange equation solved by $V$ is obtained in Step 3.

{\it\textbf{Step 1}} (Radially symmetric, non-increasing assumption) \quad First, we may assume that $u_j$ is a non-negative, radially symmetric, and non-increasing function for any $j \ge 0$. Indeed, the fundamental identity $\|\nabla u_j\|_2^2=\|\nabla |u_j| \|_2^2$ from \cite[Lemma 7.6]{GT01} implies $J(|u_j|)=J(u_j)$, so that $|u_j|$ is also a maximizing sequence. Next, let $u_j^*$ denote the symmetric decreasing rearrangement of $|u_j|$. Then Riesz's rearrangement inequality \cite[pp.87]{lieb202} ensures that 
$\|u_j^*\|_1=\|u_j\|_1$ and $\|u_j^*\|_{m+1}=\|u_j\|_{m+1}$. Moreover, the classical P\'{o}lya-Szeg{\H{o}} inequality \cite{PS51} yields
\begin{align}
\|\nabla u_j^*\|_2^2 \le \|\nabla u_j\|_2^2.
\end{align}
Consequently, we have
\begin{align*}
J(u_j)=J(|u_j|)=\frac{\|u_j\|_{m+1}^{\alpha+2}}{\|u_j\|_1^\alpha \|\nabla u_j \|_2^2} \le \frac{\|u_j^*\|_{m+1}^{\alpha+2}}{\|u_j^*\|_1^\alpha \|\nabla u_j^* \|_2^2}= J(u_j^*).
\end{align*}
This entails that $(u_j^*)_j$ is also a maximizing sequence.

{\it\textbf{Step 2}} (Existence of the supremum) \quad We next turn to the convergence of the maximizing sequence. Indeed, since $u_j$ is radially symmetric and non-increasing, we find that
\begin{align*}
\int_{\R^d} u_j(x) dx = d \alpha_d
\int_{0}^{\infty}
u_j(r) r^{d-1} dr \ge d \alpha_d \int_0^R u_j(r) r^{d-1} dr \ge \alpha_d u_j(R) R^d
\end{align*}
which gives $u_j(R) \le C R^{-d}$. Moreover, $u_j \in H^1(\R^d)$, and the Sobolev inequality yields
\begin{align}
C \|\nabla u_j\|_2 \ge \|u_j\|_{2d/(d-2)} \ge \left(\alpha_d R^d \right)^{\frac{d-2}{2d}} u_j(R).
\end{align}
Hence, we have
\begin{align}
u_j(R) \le G(R) :=C_0 \inf \left\{ R^{-d}, ~R^{-(d-2)/2} \right\},
\quad \text{for any } R>0.
\end{align}
Since $u_j$ is bounded on $(R,\infty)$ for any $R>0$, we deduce from Helly's selection principle \cite[pp.89]{lieb202} that
there exist a subsequence of ${u_j}$ (not relabeled for
convenience) and a non-negative, non-increasing function $V$ such that $u_j \to V$ point-wisely. Moreover, the condition $0<m<(d+2)/(d-2)$ implies 
\begin{align*}
\left\|G(|x|)\right\|_{m+1}^{m+1}=d\alpha_d
\int_{0}^{\infty} G(r)^{m+1} r^{d-1} dr =C_1 \left[
\int_0^1 \frac{1}{r^{(m+1) \frac{d-2}{2}}} r^{d-1} dr +
\int_1^\infty \frac{1}{r^{d(m+1)}} r^{d-1} dr
\right] < \infty.
\end{align*}
Then the conditions
\begin{align*}
\left\{
  \begin{array}{ll}
    u_j \le G(|x|), \\
    \|G\|_{m+1}< \infty,   \\
    u_j \to V \quad \text{point-wisely}
  \end{array}
\right.
\end{align*}
together with the dominated convergence theorem ensure
\begin{align}
\lim_{j \to \infty} \|u_j\|_{m+1}=\|V\|_{m+1}.
\end{align}
On the other hand, the point-wise convergence of $u_j$ and Fatou's lemma yield $\|V\|_1 \le \|u_j\|_1$ and $\| \nabla V\|_2 \le \| \nabla u_j\|_2$. Hence, we arrive at
\begin{align}
C_*=\lim_{j \to \infty} J(u_j)=\lim_{j \to \infty} \frac{\|u_j\|_{m+1}^{\alpha+2}}{\|u_j\|_1^\alpha \|\nabla u_j \|_2^2} \le \frac{\|V\|_{m+1}^{\alpha+2}}{\|V\|_1^\alpha \|\nabla V \|_2^2}=J(V) \le C_*.
\end{align}
Therefore, we conclude that $J(V)=C_*$. 

{\it\textbf{Step 3}} (Euler-Lagrange equation for $V$) \quad We have already shown that $V$ is a radially symmetric and non-increasing function. Let us now derive the Euler-Lagrange equation for $V$. We introduce
$$\Omega=\left\{ x\in \R^d ~|~ V(x)>0 \right\}.$$
For any $\varphi \in C_0^\infty(\Omega)$, there exists $\varepsilon_0>0$ such that $V+\varepsilon \varphi \ge 0$ for $0<\varepsilon<\varepsilon_0$. Then a direct computation of 
\begin{align}
\left. \frac{d}{d\varepsilon} \right|_{\varepsilon=0} J(V+\varepsilon \varphi)=0
\end{align}
leads to
\begin{align}
0=\int_{\R^d} \left( \Delta V+\frac{\alpha+2}{2} \frac{\| \nabla V \|_2^2}{\|V\|_{m+1}^{m+1}}V^m-\frac{\alpha}{2} \frac{\|\nabla V\|_2^2}{\|V\|_1} \right) \varphi(x) dx,\quad \text{for any } \varphi \in C_0^\infty(\Omega).
\end{align}
This gives that $V$ satisfies the following Euler-Lagrange equation
\begin{align}\label{VV}
-\Delta V=\frac{\alpha+2}{2} \frac{\| \nabla V \|_2^2}{\|V\|_{m+1}^{m+1}}V^m-\frac{\alpha}{2} \frac{\|\nabla V\|_2^2}{\|V\|_1} \quad \text{a.e. in supp}(V),
\end{align}
Moreover, using 
\begin{align}
\|V\|_{m+1}^{\alpha+2} = C_* \|V\|_1^\alpha \|\nabla V\|_2^2,
\end{align}
we can rewrite \eqref{VV} as
\begin{align}\label{V5}
-\Delta V=\frac{\alpha+2}{2} \frac{\|V\|_{m+1}^{\alpha+1-m}}{\|V\|_1^\alpha C_*} V^m-\frac{\alpha}{2}\frac{\|V\|_{m+1}^{\alpha+2}}{\|V\|_1^{\alpha+1} C_*} \quad \text{a.e. in supp}(V).
\end{align}
Finally, for any solution $V$ to \eqref{V5}, the re-scaled function 
\begin{align}\label{WW}
W(x)=\lambda V(\mu x), \text{ for arbitrary } \lambda,\mu>0
\end{align}
also solves \eqref{V5} and satisfies
\begin{align}\label{homoge}
J(V)=J(W).
\end{align}
This completes the proof.
\end{proof}

\begin{remark}
Particularly, taking 
\begin{align}
\lambda=\left( \frac{\alpha \|V\|_{m+1}^{m+1}}{(\alpha+2)\|V\|_1} \right)^{1/m}, \quad \mu=\left( \frac{(\alpha+2)\|\nabla V\|_2^2}{2 \|V\|_{m+1}^{m+1}} \lambda^{m-1} \right)^{1/2}
\end{align}
in \eqref{WW}, we obtain
\begin{align}
-\Delta W=W^m-1, \quad x \in B(0,R).
\end{align}
For $m=1$, from Theorem \ref{steadythm}(i), we find that the extremal function satisfies
\begin{align}
\left\{
  \begin{array}{ll}
    W''+\frac{d-1}{r}W'+W=1, \quad 0<r<R, \\
    W(0)<\infty, \quad W'(0)=0, \\
    W(R)=0,\quad W'(R)=0.
  \end{array}
\right.
\end{align}
A direct computation gives 
\begin{align}
W(r)=1-\frac{J_{\frac{d-2}{2}}(r)}{J_{\frac{d-2}{2}}(R)},
\end{align}
where $J_{\frac{d-2}{2}}(r)$ is the spherical Bessel function of the first kind of order $\frac{d-2}{2}$. 

In particular, for $d=3$, 
\begin{align}
W(r)=1-\frac{R}{r} \frac{\sin r}{\sin R}.
\end{align}
Using $W'(R)=0$, we get 
\begin{align}
R \cos R-\sin R=0,\text{ i.e., } \tan R=R.
\end{align}
\end{remark}

Consequently, recalling definitions \eqref{YMc} and \eqref{YMPstar}, we are now ready to deduce the following result concerning the radial steady states and the extremal functions of the GNS inequality \eqref{GNS1}.

\begin{theorem}\label{Ustar}
Let $0<m<\frac{d+2}{d-2}$ and $\alpha = \frac{d+2-(d-2)m}{dm}$. Then the following statements hold: 
\begin{itemize}
  \item[(i)] If $m \neq 1+2/d$, then in $\mathcal{Y}_{M,P_\ast}$, the GNS inequality \eqref{GNS1} admits a unique optimizer $U_\ast$, which is the unique radially decreasing, compactly supported steady-state solution to \eqref{tfsystem}.
  \item[(ii)] If $m = 1+2/d$, then in $\mathcal{Y}_{M_c}$, the GNS inequality \eqref{GNS1} admits infinitely many optimizers. These optimizers are radially decreasing, compactly supported steady-state solutions to \eqref{tfsystem}, and form a one-parameter family under mass-invariant scaling.
\end{itemize}
\end{theorem}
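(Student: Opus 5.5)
We will derive Theorem \ref{Ustar} from Proposition \ref{Cstar}, Proposition \ref{prop1} and Theorem \ref{steadythm}(i). The key observation is that, within the two-parameter family $\lambda V(\mu|x-x_0|)$ of GNS extremals, the steady-state condition fixes one parameter and the mass constraint fixes the other. The exception is $m=1+2/d$, where the two conditions degenerate into a single one.

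\textbf{Step 1: extremal to steady state.} Let $V$ be the radially non-increasing extremal from Proposition \ref{Cstar}, and set $W=\lambda V(\mu\cdot)$. Since $J$ is invariant under this scaling, every $W$ is again an extremal and solves \eqref{VV}, now with its own norms. We compare \eqref{VV} with the steady-state equation \eqref{second}, which reads $-\Delta W=W^m+\text{const}$ with the constant from Proposition \ref{prop1}(iv).

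\emph{Matching the coefficient of $W^m$.} This requires $\frac{\alpha+2}{2}\|\nabla W\|_2^2=\|W\|_{m+1}^{m+1}$. Using the GNS equality $\|\nabla W\|_2^2=\|W\|_{m+1}^{\alpha+2}/(C_*\|W\|_1^\alpha)$, this becomes $\|W\|_{m+1}^{m-\alpha-1}=\frac{\alpha+2}{2C_*M^\alpha}$, that is, $\|W\|_{m+1}=P_*$ from \eqref{Pstar}.

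\emph{Matching the constant.} A check with the definition of $\alpha$ shows that this coefficient identity is exactly the Pohozaev identity \eqref{pohozaev}, because $\frac{dm}{m+1}\cdot\frac{2}{d+2}\cdot\frac{\alpha+2}{2}=1$ after simplification. Once \eqref{pohozaev} holds, the constant $-\frac{\alpha}{2}\|\nabla W\|_2^2/M$ coincides with $\overline{C}$ in Proposition \ref{prop1}(iv).

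\emph{Conclusion of Step 1.} By Proposition \ref{prop1}, (iv)$\Rightarrow$(i), $W$ is a steady state. Theorem \ref{steadythm}(i) gives compact support, with $\partial_n W=0$ on the ball boundary.

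\textbf{Step 2: existence and parameter count.} Under $W=\lambda V(\mu x)$ we have $\|W\|_1=\lambda\mu^{-d}\|V\|_1$ and $\|W\|_{m+1}=\lambda\mu^{-d/(m+1)}\|V\|_{m+1}$. The two constraints $\|W\|_1=M$ and $\|W\|_{m+1}=P_*$ form a linear system in $(\log\lambda,\log\mu)$ with determinant proportional to $d-\frac{d}{m+1}\cdot 1$ relative to the $\lambda$-exponents. More precisely, eliminating $\lambda$ leaves $\mu^{d-d/(m+1)}$ determined, so the system is uniquely solvable.

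Note, however, that the $P_*$ condition is itself equivalent to scale invariance failing. Under the mass-preserving scaling $\lambda=\mu^d$, the norm $\|W\|_{m+1}$ scales like $\mu^{dm/(m+1)}$, while the requirement $\|W\|_{m+1}^{m-\alpha-1}=$ const is non-degenerate precisely when $m\neq\alpha+1$, i.e. $m\neq1+2/d$. This gives one $U_*\in\mathcal{Y}_{M,P_*}$ up to translation.

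For $m=1+2/d$ we have $\alpha=m-1$, and the matching condition reduces to $M^\alpha=\frac{\alpha+2}{2C_*}$, i.e. $M=M_c$ by \eqref{Mc}. The mass-invariant scaling $\mu^dW(\mu x)$ preserves both $M$ and the condition, which yields the one-parameter family $u_c$ of part (ii).

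\textbf{Step 3: uniqueness (main obstacle).} We must show two things. First, every optimizer in $\mathcal{Y}_{M,P_*}$ is of the form above; this follows from the characterization in Proposition \ref{Cstar}, provided one knows that all extremals are rescalings of a single profile $V$. Second, every radially decreasing compactly supported steady state coincides with $U_*$. Proposition \ref{Cstar} as stated does not yield the first point by itself, so we reduce both points to uniqueness of the free boundary ODE problem.

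Any radially decreasing steady state satisfies, after the normalization in the Remark following Proposition \ref{Cstar}, the problem $-\Delta W=W^m-1$ on $B_R$ with $W(R)=W'(R)=0$. By the overdetermined condition, this is equivalent to a shooting problem from $r=R$ backward, or from $W(0)$ forward. We would invoke the uniqueness result for positive radial solutions of $\Delta w+w^m-1=0$ with zero Dirichlet–Neumann data, in the style of Peletier–Serrin \cite{PS98}, to obtain a unique $(W,R)$. Undoing the normalization, whose parameters are fixed uniquely by $M$ and by the Pohozaev/$P_*$ relation when $m\neq1+2/d$, then gives uniqueness of $U_*$ up to translation.

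The delicate points are verifying the hypotheses of \cite{PS98} in the full range $0<m<\frac{d+2}{d-2}$, including $m<1$, where the nonlinearity is not Lipschitz at $0$, and ensuring that the free boundary $W'(R)=0$ is consistent with their setting.
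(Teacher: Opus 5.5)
Your proposal is correct and follows essentially the same route as the paper. You rescale a GNS extremal so that $\|U_\ast\|_1=M$ and $\|U_\ast\|_{m+1}=P_\ast$; your linear system in $(\log\lambda,\log\mu)$ is exactly the paper's choice of $\lambda_\ast,\mu_\ast$. You then read off the steady-state equation and get compact support with $\partial_n U_\ast=0$ from Theorem \ref{steadythm}(i). Uniqueness comes, as in the paper, from the free-boundary result [PS98, Theorem 3] applied to \eqref{radialequation}, and the case $m=1+2/d$ is handled by the mass-invariant scaling at $M=M_c$.

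Your explicit check that the $P_\ast$ normalization is precisely the Pohozaev identity \eqref{pohozaev}, so that the constant equals $\overline{C}$, fills in a step the paper only asserts. The hypotheses of [PS98] that you flag are likewise left unchecked in the paper.
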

\begin{proof}
{\it\textbf{Step 1}} (Proof of (i)) \quad For $m \neq 1+2/d$, we restate the definition of $P_\ast$ from \eqref{Pstar} as
\begin{align}
P_\ast=\left( \frac{\alpha+2}{2 C_* M^\alpha} \right)^{\frac{1}{m-\alpha-1}}=\left( \frac{(d+2)(m+1)}{2dm C_* M^\alpha} \right)^{\frac{dm}{(dm-d-2)(m+1)}}.
\end{align}
Then, for a given solution $V_0(x)$ to \eqref{VV2}, we set
\begin{align}\label{WW1}
U_\ast(x)=\frac{1}{\lambda_\ast} V_0 \left( \frac{x}{\mu_\ast} \right),
\end{align}
where
\begin{align}
\lambda_\ast=\left( \frac{M}{\|V_0\|_1} \frac{\|V_0\|_{m+1}^{m+1}}{P_\ast^{m+1}} \right)^{1/m},\quad \mu_\ast=\left( \frac{\lambda M}{\|V_0\|_1} \right)^{1/d}.
\end{align}
It follows that $U_\ast$ is also a solution to \eqref{VV2} satisfying
\begin{align}
-\Delta U_\ast=U_\ast^m+\frac{1}{M}\frac{(d-2)m-(d+2)}{(d+2)(m+1)} \|U_\ast\|_{m+1}^{m+1} \quad \text{a.e. in supp}(U_\ast)
\end{align}
with 
\begin{align}\label{MPstar}
\|U_\ast\|_1=M,\quad \|U_\ast\|_{m+1}=P_\ast.
\end{align}
This indicates that $U_\ast$ is a radially symmetric, non-increasing steady-state solution to \eqref{tfsystem}. 

Next, applying Theorem \ref{steadythm}(i), we conclude that for $0<m<(d+2)/(d-2)$, $U_\ast$ solves the free boundary problem
\begin{align}\label{radialequation}
\left\{
  \begin{array}{ll}
   -u''-\frac{d-1}{r}u'=u^m+\frac{1}{M}\frac{(d-2)m-(d+2)}{(d+2)(m+1)} P_\ast^{m+1}, \quad 0<r<R, \\[2mm]
    u'(0)=0,\quad u(R)=u'(R)=0
  \end{array}
\right.
\end{align}
with $0<R<\infty$. It follows from \cite[Theorem 3]{PS98} that there is a unique solution to \eqref{radialequation}. This completes the proof of (i).

{\it\textbf{Step 2}} (Proof of (ii)) \quad For $m=1+2/d$, one has $\alpha+2=m+1$. Then, \eqref{VV2} reduces to
\begin{align}\label{VV3}
-\Delta V=\frac{\alpha+2}{2} \frac{1}{\|V\|_1^\alpha C_*} V^m-\frac{\alpha}{2}\frac{\|V\|_{m+1}^{m+1}}{\|V\|_1^{\alpha+1} C_*} \quad \text{a.e. in supp}(V).
\end{align}
Hence, a solution $V_c$ to \eqref{VV3} is also a steady-state solution to \eqref{tfsystem} if and only if 
\begin{align}
\|V_c\|_1 = M_c= \left( \frac{m+1}{2C_*} \right)^{1/\alpha}.
\end{align}
Thus, $V_c$ satisfies
\begin{align}\label{VV4}
-\Delta V_c=V_c^m-\frac{m-1}{m+1}\frac{\|V_c\|_{m+1}^{m+1}}{M_c} \quad \text{a.e. in supp}(V_c).
\end{align}
Observe that equation \eqref{VV4} is scaling invariant: if $V_c$ is a solution, then $\mu^d V_c(\mu x)$ is also a solution for any $\mu>0$, with the same mass $M_c$. Therefore, the solutions of \eqref{VV4} form a one-parameter family of radial steady states. This completes the proof of (ii).
\end{proof}

\subsection{Energy landscape and minimization properties of $U_\ast$}\label{133}

In this subsection, using the optimizers of the GNS inequality \eqref{GNS1}, we proceed to investigate the global minimizers of the free energy $F(u)$. We first recall the following well-known result concerning the infimum of $F$ (see \cite{Jose24}), and a brief proof is included for completeness. 

\begin{lemma}[Infimum of the free energy] \label{freeinf}
For $m>1+2/d$, we have
\begin{equation}
\inf_{u \in \mathcal{Y}_M} F(u) = -\infty.  % the infimum can't be obtained. not uniformly bounded below.
\end{equation}
\end{lemma}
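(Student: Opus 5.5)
The plan is to use the mass-preserving dilation already employed in the proof of Proposition \ref{prop1}. I fix any nonzero $u\in\mathcal{Y}_M$; for instance, I take a smooth, nonnegative, compactly supported bump rescaled to have mass $M$. Then I set $u_\lambda(x)=\lambda^d u(\lambda x)$ for $\lambda>0$. A change of variables shows $\|u_\lambda\|_1=\|u\|_1=M$, and $u_\lambda\ge0$, $u_\lambda\in H^1(\R^d)$. Hence $u_\lambda\in\mathcal{Y}_M$ for every $\lambda>0$.

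Next I compute the two pieces of the free energy separately. The gradient term scales as $\|\nabla u_\lambda\|_2^2=\lambda^{d+2}\|\nabla u\|_2^2$, and the potential term as $\|u_\lambda\|_{m+1}^{m+1}=\lambda^{dm}\|u\|_{m+1}^{m+1}$. This gives the same function as $G(\lambda)$ in the proof of Proposition \ref{prop1}:
\begin{align*}
F(u_\lambda)=\frac{\lambda^{d+2}}{2}\|\nabla u\|_2^2-\frac{\lambda^{dm}}{m+1}\|u\|_{m+1}^{m+1}.
\end{align*}

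The key observation is that $m>1+2/d$ is exactly the condition $dm>d+2$. So the negative term carries the strictly larger power of $\lambda$. Since $u\not\equiv0$ has finite and positive $\|u\|_{m+1}$ and finite $\|\nabla u\|_2$, factoring out $\lambda^{dm}$ gives
\begin{align*}
F(u_\lambda)=\lambda^{dm}\Bigl(\tfrac12\lambda^{d+2-dm}\|\nabla u\|_2^2-\tfrac{1}{m+1}\|u\|_{m+1}^{m+1}\Bigr)\to-\infty
\end{align*}
as $\lambda\to\infty$. Therefore $\inf_{\mathcal{Y}_M}F=-\infty$.

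There is no serious obstacle. The only points needing care are the following. First, $\mathcal{Y}_M$ as defined does not explicitly require $L^{m+1}$, but the chosen $u$ is bounded with compact support, so $u\in L^{m+1}$ and all norms are finite. Second, the dilation must preserve the mass constraint, which the $L^1$-invariant scaling does.
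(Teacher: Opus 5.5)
Your proposal is correct and follows the paper's proof essentially verbatim. Both use the mass-preserving dilation $u_\lambda(x)=\lambda^d u(\lambda x)$, the scaling $F(u_\lambda)=\frac{\lambda^{d+2}}{2}\|\nabla u\|_2^2-\frac{\lambda^{dm}}{m+1}\|u\|_{m+1}^{m+1}$, and the equivalence $m>1+2/d \iff dm>d+2$. Your choice of a bounded, compactly supported bump, which guarantees $0<\|u\|_{m+1}<\infty$, makes explicit a point the paper leaves implicit.
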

\begin{proof}
Consider the scaling $u_\lambda(x) = \lambda^d u(\lambda x)$ for $\lambda>0$. A direct computation yields
\begin{align*}
F(u_\lambda) = \frac{\lambda^{d+2}}{2} \|\nabla u\|_2^2 - \frac{\lambda^{dm}}{m+1} \|u\|
_{m+1}^{m+1}.
\end{align*}
Since $m > 1+2/d$ implies $dm>d+2$, the negative term dominates as $\lambda \to \infty$, so $F(u_\lambda) \to -\infty$. Thus, the infimum of $F$ is $-\infty$.
\end{proof}

By Lemma \ref{freeinf}, the free energy $F(u)$ is unbounded from below in the full space $\mathcal{Y}_M$. In addition, Proposition \ref{prop1} indicates that any steady-state solution to \eqref{tfsystem} is a critical point of $F(u)$. Thus, steady-state solutions cannot be global minimizers of $F(u)$ in this space. However, under suitable additional constraints, such steady states become global minimizers of the free energy. To state this precisely, we first define the set of steady-state solutions with mass $M$ by
\begin{align}
\mathcal{S}_M := \{ u \in \mathcal{Y}_M : u \text{ is any steady-state solution to } \eqref{tfsystem} \}.
\end{align}

\begin{proposition}[Aggregation-dominated regime]\label{picture}
Let $1+\frac{2}{d}<m<\frac{d+2}{d-2}, \alpha=\frac{d+2-(d-2)m}{dm}$, and $P_\ast$ be defined by \eqref{Pstar}. Let $U_\ast$ be the unique optimizer from Theorem \ref{Ustar}(i). Then the following statements hold:
\begin{itemize}
   \item[(i)] $U_\ast$ is the unique global minimizer of $F(u)$ in $\mathcal{Y}_{M,P_\ast}$.
   \item[(ii)] $U_\ast$ is the unique global minimizer of $F(u)$ in $\mathcal{S}_M$.
\end{itemize} 
Moreover, 
\begin{align}
\displaystyle \inf_{u \in \mathcal{Y}_{M,P_\ast}} F(u) = \displaystyle \inf_{u \in \mathcal{S}_{M}} F(u)= F(U_\ast)=\frac{dm-(d+2)}{(d+2)(m+1)} P_\ast^{m+1}.
\end{align}   
\end{proposition}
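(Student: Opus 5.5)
The argument rests on the GNS inequality \eqref{GNS1} together with the Pohozaev identity \eqref{pohozaev}; Corollary \ref{Fus} then supplies the value of the free energy.

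\textbf{Part (i).} Fix $u \in \mathcal{Y}_{M,P_\ast}$, so $\|u\|_1=M$ and $\|u\|_{m+1}=P_\ast$. By \eqref{GNS1},
\begin{align*}
\|\nabla u\|_2^2 \ge \frac{P_\ast^{\alpha+2}}{C_\ast M^\alpha},
\end{align*}
and hence
\begin{align*}
F(u) \ge \frac{P_\ast^{\alpha+2}}{2C_\ast M^\alpha} - \frac{P_\ast^{m+1}}{m+1}.
\end{align*}
The definition \eqref{Pstar} gives $\frac{1}{2C_\ast M^\alpha}=\frac{P_\ast^{m-\alpha-1}}{\alpha+2}$. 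Substituting, the lower bound becomes
\begin{align*}
\Bigl(\frac{1}{\alpha+2}-\frac{1}{m+1}\Bigr)P_\ast^{m+1}.
\end{align*}
Inserting $\alpha+2=\frac{(d+2)(m+1)}{dm}$ shows that this equals $\frac{dm-(d+2)}{(d+2)(m+1)}P_\ast^{m+1}$.

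This lower bound is attained exactly when equality holds in \eqref{GNS1}. By Theorem \ref{Ustar}(i), the only optimizer inside $\mathcal{Y}_{M,P_\ast}$ is $U_\ast$. Therefore $U_\ast$ is the unique minimizer, and Corollary \ref{Fus} confirms that $F(U_\ast)$ equals the stated value.

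\textbf{Part (ii).} Let $U_s\in\mathcal{S}_M$. By Corollary \ref{Fus},
\begin{align*}
F(U_s)=\frac{dm-(d+2)}{(d+2)(m+1)}\|U_s\|_{m+1}^{m+1}.
\end{align*}
The prefactor is positive because $m>1+2/d$. It therefore suffices to show $\|U_s\|_{m+1}\ge P_\ast$, with equality only if $U_s=U_\ast$.

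The Pohozaev identity \eqref{pohozaev} gives $\|\nabla U_s\|_2^2=\frac{2dm}{(d+2)(m+1)}\|U_s\|_{m+1}^{m+1}=\frac{2}{\alpha+2}\|U_s\|_{m+1}^{m+1}$. Applying \eqref{GNS1} to $U_s$ then yields
\begin{align*}
\|U_s\|_{m+1}^{\alpha+2}\le \frac{2C_\ast M^\alpha}{\alpha+2}\|U_s\|_{m+1}^{m+1},
\end{align*}
that is, $\|U_s\|_{m+1}^{m-\alpha-1}\ge \frac{\alpha+2}{2C_\ast M^\alpha}=P_\ast^{m-\alpha-1}$. In the regime $m>1+2/d$ we have $m-\alpha-1>0$, so this gives $\|U_s\|_{m+1}\ge P_\ast$.

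If equality holds, then $U_s$ is a GNS optimizer in $\mathcal{Y}_{M,P_\ast}$, and Theorem \ref{Ustar}(i) forces $U_s=U_\ast$. Conversely, $U_\ast\in\mathcal{S}_M$ by Theorem \ref{Ustar}(i). Combining both parts gives equality of the two infima with $F(U_\ast)$.

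\textbf{Main obstacle.} The delicate step is the uniqueness assertion. Theorem \ref{Ustar}(i) establishes uniqueness among radially decreasing steady states. Extending it to all optimizers in $\mathcal{Y}_{M,P_\ast}$ requires Proposition \ref{Cstar}: every optimizer has the form $\lambda V(\mu|x-x_0|)$. The two constraints $\|u\|_1=M$ and $\|u\|_{m+1}=P_\ast$ then fix $\lambda$ and $\mu$. This leaves only translations, so uniqueness holds up to translation, which is how I would interpret the statement.

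We also need that every steady state satisfies the Pohozaev identity, which is Proposition \ref{prop1}(iii) under assumptions (A1)–(A3).
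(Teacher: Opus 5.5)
Your proposal is correct and follows the paper's proof essentially step for step. For (i), both use the GNS inequality together with the definition of $P_\ast$; for (ii), both combine the Pohozaev identity with GNS to get $\|w\|_{m+1}\ge P_\ast$ on $\mathcal{S}_M$ and then apply Corollary \ref{Fus}. Your added point is a correct sharpening of the paper's claim: translates of $U_\ast$ lie in both $\mathcal{Y}_{M,P_\ast}$ and $\mathcal{S}_M$ with the same energy, so uniqueness can only hold up to translation.
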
 
\begin{proof}
We first state two results directly derived from Theorem \ref{Ustar} and Proposition \ref{prop1}(iii):
\begin{align}
  u \text{ is an optimizer of the GNS inequality \eqref{GNS1} } &\Rightarrow \|u\|_{m+1}^{\alpha+2} = C_* \|u\|_1^\alpha \|\nabla u\|_2^2, \label{mpstar} \\
  u \text{ is a steady-state solution to \eqref{tfsystem} } &\Rightarrow \frac{dm}{m+1} \|u\|_{m+1}^{m+1}=\frac{d+2}{2} \|\nabla u\|_2^2.  \label{sm}
\end{align}

{\it\textbf{Step 1}} (Proof of (i)) \quad For a given mass $M>0$ and any $v \in \mathcal{Y}_{M,P_\ast}$, applying the GNS inequality \eqref{GNS1}, we get
\begin{align}
F(v) & = \frac{1}{2}\|\nabla v\|_2^2-\frac{1}{m+1}\|v\|_{m+1}^{m+1} \nonumber \\
& \ge \frac{1}{2C_\ast M^\alpha} \|v\|_{m+1}^{\alpha+2}-\frac{1}{m+1}\|v\|_{m+1}^{m+1} \nonumber \\
& = \frac{1}{2C_\ast M^\alpha} P_\ast^{\alpha+2}-\frac{1}{m+1}P_\ast^{m+1} \nonumber \\
& = \frac{dm-(d+2)}{(d+2)(m+1)} \|U_\ast\|_{m+1}^{m+1} = F(U_\ast),  \label{Fmin}
\end{align}
where we have used Corollary \ref{Fus}. Thus, for any $v \in \mathcal{Y}_{M,P_\ast}$, it holds that $F(v) \ge F(U_\ast)$. Equality in \eqref{Fmin} is attained if and only if $v=U_\ast$, by virtue of \eqref{mpstar}. This completes the proof of (i).

{\it\textbf{Step 2}} (Proof of (ii)) \quad For any $w \in \mathcal{S}_M$, by the GNS inequality \eqref{GNS1} and identity \eqref{sm}, we obtain
\begin{align}
\|w\|_{m+1}^{\alpha+2} & \le C_* M^\alpha \|\nabla w\|_2^2 \nonumber \\
& = C_* M^\alpha \frac{2dm}{(d+2)(m+1)} \|w\|_{m+1}^{m+1} \nonumber \\
&= \|w\|_{m+1}^{\alpha+2} \frac{\|w\|_{m+1}^{m-\alpha-1}}{P_\ast^{m-\alpha-1}}  = \|w\|_{m+1}^{\alpha+2} \frac{\|w\|_{m+1}^{m-\alpha-1}}{\|U_\ast\|_{m+1}^{m-\alpha-1}}. \label{m1min}
\end{align}
Since $m-\alpha-1>0$ for $m>1+2/d$, we conclude that $\|w\|_{m+1} \ge \|U_\ast\|_{m+1}$ for all $w \in \mathcal{S}_M$. Then Corollary \ref{Fus} assures that
\begin{align}
F(w) =\frac{dm-(d+2)}{(d+2)(m+1)} \|w\|_{m+1}^{m+1}  \ge \frac{dm-(d+2)}{(d+2)(m+1)} \|U_\ast\|_{m+1}^{m+1}=F(U_\ast).
\end{align}
Equality in \eqref{m1min} holds if and only if $w=U_\ast$, thanks to \eqref{mpstar}. This completes the proof of (ii).
\end{proof}

\begin{remark}
Although $F(u)$ is unbounded from below on $\mathcal{Y}_M$ (Lemma \ref{freeinf}), $U_\ast$ is radially decreasing and the unique global minimizer of $F(u)$ in both $\mathcal{S}_M$ and $\mathcal{Y}_{M,P_\ast}$ that satisfies $F(U_\ast)>0$. This makes $ U_\ast$ a saddle point in the full energy landscape: it attracts radial perturbations but is unstable under certain non-radial concentrations.
The boundedness of the gradient and \( L^{m+1} \)-norm acts as a \textit{regularity barrier} that prevents the system from following energy-collapse paths,
thereby selecting the steady state $U_\ast$ as an accessible and stable metastable configuration.
\end{remark}

\begin{proposition}[Fair competition regime]\label{picture1}
Let $m=1+\frac{2}{d}, \alpha=\frac{d+2-(d-2)m}{dm}$, and $M_c$ be defined by \eqref{Mc}. Then there exist infinitely many global minimizers of $F(u)$ in $\mathcal{Y}_{M_c}$, and these minimizers are radially decreasing, compactly supported steady-state solutions $u_c$ to \eqref{tfsystem}. 
Moreover, 
\begin{align}
\displaystyle \inf_{u \in \mathcal{Y}_{M_c}} F(u)=F(u_c)=0.
\end{align}
\end{proposition}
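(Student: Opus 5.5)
The plan is to use the GNS inequality \eqref{GNS1} in the mass-critical exponent $m=1+2/d$. Here $\alpha+2=m+1$ and $\alpha=2/d$, so on $\mathcal{Y}_{M_c}$ it becomes a homogeneous comparison between $\|\nabla u\|_2^2$ and $\|u\|_{m+1}^{m+1}$.

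\textbf{Lower bound.} For any $u\in\mathcal{Y}_{M_c}$, \eqref{GNS1} gives
\begin{align*}
\|u\|_{m+1}^{m+1}\le C_* M_c^{\alpha}\|\nabla u\|_2^2=\frac{m+1}{2}\|\nabla u\|_2^2,
\end{align*}
using $M_c^\alpha=(m+1)/(2C_*)$ from \eqref{Mc}, since $1/\alpha=d/2$. Hence
\begin{align*}
F(u)=\frac12\|\nabla u\|_2^2-\frac{1}{m+1}\|u\|_{m+1}^{m+1}\ge 0 \quad \text{for all } u\in\mathcal{Y}_{M_c},
\end{align*}
and therefore $\inf_{\mathcal{Y}_{M_c}}F\ge 0$.

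\textbf{Attainment.} By Theorem \ref{Ustar}(ii), there exist GNS optimizers $u_c\in\mathcal{Y}_{M_c}$. They are radially decreasing, compactly supported steady states. For them the GNS inequality is an equality, so the chain above gives $F(u_c)=0$. One can cross-check this with Corollary \ref{Fus}: for $m=1+2/d$ every steady state has zero free energy. Thus $\inf F=0=F(u_c)$, and every $u_c$ is a global minimizer.

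\textbf{Infinitely many minimizers.} The mass-invariant scaling $u_\mu(x)=\mu^d u_c(\mu x)$ preserves the $L^1$ norm. It multiplies $\|\nabla u\|_2^2$ by $\mu^{d+2}$ and $\|u\|_{m+1}^{m+1}$ by $\mu^{dm}=\mu^{d+2}$, so $J$ and $F$ are unchanged and each $u_\mu$ is still an optimizer with $F(u_\mu)=0$. Because $\|\nabla u_\mu\|_2$ varies with $\mu$, these functions are pairwise distinct, which gives a one-parameter family as in Theorem \ref{Ustar}(ii). Translations provide further minimizers.

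\textbf{Main obstacle.} The only delicate point is justifying that the minimizers are exactly the GNS optimizers, so that they inherit radial monotonicity and compact support. Equality $F(u)=0$ forces equality in \eqref{GNS1}, so Proposition \ref{Cstar} identifies $u$, up to translation, as $\lambda V(\mu|x-x_0|)$. The mass constraint then matches such a $u$ with the steady-state family of Theorem \ref{Ustar}(ii). Compact support comes from Theorem \ref{steadythm}(i), and this step relies on the regularity assumptions $(A1)$–$(A4)$ for these profiles.
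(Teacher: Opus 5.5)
Your proposal is correct and follows essentially the same route as the paper. Both use \eqref{GNS1} with $\alpha+2=m+1$ and $C_\ast M_c^{\alpha}=(m+1)/2$ to get $F\ge 0$ on $\mathcal{Y}_{M_c}$, with equality exactly for the GNS optimizers of Theorem \ref{Ustar}(ii); you additionally make explicit why $F(u)=0$ forces equality in \eqref{GNS1} and why the mass-invariant scaling yields infinitely many distinct minimizers, both of which the paper leaves implicit.
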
 
\begin{proof}
For any $u_c \in \mathcal{Y}_{M_c}$, the GNS equality \eqref{GNS1} gives
\begin{align}
F(u_c) & =\frac{1}{2}\|\nabla u_c\|_2^2-\frac{1}{m+1}\|u_c\|_{m+1}^{m+1}  \nonumber \\
& \ge \frac{1}{2} \frac{\|u_c\|_{m+1}^{\alpha+2}}{C_\ast \|u_c\|_1^\alpha}-\frac{1}{m+1}\|u_c\|_{m+1}^{m+1} \nonumber\\
& = \frac{1}{2} \frac{\|u_c\|_{m+1}^{\alpha+2}}{C_\ast M_c^\alpha}-\frac{1}{m+1}\|u_c\|_{m+1}^{m+1} = 0, \label{Fmass}
\end{align}
since $\alpha+2=m+1$ due to $m=1+2/d$. Moreover, Equality in \eqref{Fmass} holds if and only if $u_c$ is a radially decreasing,  compactly supported steady-state solution to \eqref{tfsystem} with mass $M_c$. This completes the proof. 
\end{proof}
\begin{remark}
The precise value of the critical mass $M_c$ and the existence of a global minimizer at this threshold were established in \cite{Jose24}. We provide an alternative derivation of $M_c$ and further characterize the full set of such minimizers, showing that it coincides with the family of radially decreasing, compactly supported steady states. An analogous minimizer structure appears in the critical regime of the Keller-Segel model studied in \cite[Proposition 5.15]{CGH20}.
\end{remark}

\begin{proposition}[Diffusion-dominated regime]\label{picture2}
Let $0<m<1+\frac{2}{d}, \alpha=\frac{d+2-(d-2)m}{dm}$, and $U_\ast$ be the unique optimizer from Theorem \ref{Ustar}(i). Then $U_\ast$ is the unique global minimizer of $F(u)$ in $\mathcal{Y}_M$. Moreover, 
\begin{align}
\displaystyle \inf_{u \in \mathcal{Y}_M} F(u)=F(U_\ast)=\frac{dm-(d+2)}{(d+2)(m+1)} P_\ast^{m+1}.
\end{align}
\end{proposition}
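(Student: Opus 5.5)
The plan is to bound $F$ from below on $\mathcal{Y}_M$ using the GNS inequality \eqref{GNS1}. This reduces the problem to minimizing a one-variable function of $s=\|v\|_{m+1}$, and the key sign fact is that $m-\alpha-1<0$ for $0<m<1+2/d$. For any $v\in\mathcal{Y}_M$, \eqref{GNS1} gives
\begin{align*}
F(v)\ge \frac{1}{2C_\ast M^\alpha}\, s^{\alpha+2}-\frac{1}{m+1}\, s^{m+1}=:g(s),\qquad s=\|v\|_{m+1}.
\end{align*}
Since $m+1<\alpha+2$ exactly when $m<1+2/d$, $g(0)=0$, $g$ is negative for small $s>0$ and $g(s)\to+\infty$ as $s\to\infty$. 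Hence $g$ has a unique critical point on $(0,\infty)$, and it is the global minimum. Solving
\begin{align*}
g'(s)=\frac{\alpha+2}{2C_\ast M^\alpha}s^{\alpha+1}-s^m=0
\end{align*}
gives $s^{m-\alpha-1}=\frac{\alpha+2}{2C_\ast M^\alpha}$, i.e.\ $s=P_\ast$ by \eqref{Pstar}. Therefore $F(v)\ge g(P_\ast)$ for every $v\in\mathcal{Y}_M$.

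Next I evaluate $g(P_\ast)$. Using $\frac{1}{2C_\ast M^\alpha}P_\ast^{\alpha+2}=\frac{1}{\alpha+2}P_\ast^{m+1}$, we get
\begin{align*}
g(P_\ast)=\Big(\frac{1}{\alpha+2}-\frac{1}{m+1}\Big)P_\ast^{m+1}.
\end{align*}
Inserting $\alpha+2=\frac{(d+2)(m+1)}{dm}$ turns this into $\frac{dm-(d+2)}{(d+2)(m+1)}P_\ast^{m+1}$, which is negative. By Theorem \ref{Ustar}(i), $U_\ast\in\mathcal{Y}_{M,P_\ast}\subset\mathcal{Y}_M$ is a steady state, so Corollary \ref{Fus} gives $F(U_\ast)=g(P_\ast)$. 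Thus the infimum is attained at $U_\ast$ and has the claimed value.

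For uniqueness, suppose $F(v)=F(U_\ast)$ with $v\in\mathcal{Y}_M$. Then both inequalities in the chain above must be equalities. First, $g(\|v\|_{m+1})=g(P_\ast)$, and since $P_\ast$ is the unique minimizer of $g$, this forces $\|v\|_{m+1}=P_\ast$. Second, equality must hold in \eqref{GNS1}. So $v$ is a GNS optimizer lying in $\mathcal{Y}_{M,P_\ast}$, and Theorem \ref{Ustar}(i) identifies it with $U_\ast$, up to translation, since \eqref{GNS1} is translation invariant.

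The main obstacle is exactly this last identification. Proposition \ref{Cstar} only describes optimizers as the rescalings $\lambda V(\mu|x-x_0|)$, so I must check that the two constraints $\|v\|_1=M$ and $\|v\|_{m+1}=P_\ast$ fix both $\lambda$ and $\mu$ uniquely. From the scalings $\|\lambda V(\mu\cdot)\|_1=\lambda\mu^{-d}\|V\|_1$ and $\|\lambda V(\mu\cdot)\|_{m+1}^{m+1}=\lambda^{m+1}\mu^{-d}\|V\|_{m+1}^{m+1}$, taking the ratio fixes $\lambda^m$, hence $\lambda$ since $m\neq0$, and then the mass fixes $\mu$. Together with the uniqueness of the radial profile from \cite{PS98} invoked in Theorem \ref{Ustar}, this shows $v=U_\ast$ modulo translation, which is the sense in which uniqueness is meant.
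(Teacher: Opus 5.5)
Your proposal is correct and follows essentially the same route as the paper. Both arguments use the GNS inequality \eqref{GNS1} to reduce $F$ on $\mathcal{Y}_M$ to the one-variable function $g$, whose unique minimizer when $\alpha+2>m+1$ is $P_\ast$, and both identify the equality case with $U_\ast$ through Theorem \ref{Ustar}(i). Your version adds detail the paper only asserts: you split the equality case into $\|v\|_{m+1}=P_\ast$ plus equality in GNS, you check that the two constraints fix the scaling parameters, and you note that uniqueness holds only modulo translation.
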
 
\begin{proof}
For any $u \in \mathcal{Y}_M$, applying the GNS inequality \eqref{GNS1} yields 
\begin{align}
F(u) & =\frac{1}{2}\|\nabla u\|_2^2-\frac{1}{m+1}\|u\|_{m+1}^{m+1}  \nonumber \\
& \ge \frac{1}{2} \frac{\|u\|_{m+1}^{\alpha+2}}{C_\ast M^\alpha}-\frac{1}{m+1}\|u\|_{m+1}^{m+1}. \label{sub}
\end{align}
Now define the auxiliary function
\begin{align}
g(x)=\frac{1}{2 C_\ast M^\alpha} x^{\alpha+2}-\frac{1}{m+1}x^{m+1}.
\end{align}
Since $0<m<1+2/d$, we have $\alpha+2>m+1$, so $g(x)$ attains its minimum at $x_*=P_\ast$. Therefore, it follows from \eqref{sub} that
\begin{align}\label{Fge}
F(u)=g\left( \|u\|_{m+1} \right) \ge \frac{1}{2 C_\ast M^\alpha} P_\ast^{\alpha+2}-\frac{1}{m+1} P_\ast^{m+1}=\frac{dm-(d+2)}{(d+2)(m+1)} \|U_\ast\|_{m+1}^{m+1}=F(U_\ast).
\end{align}
Moreover, by Theorem \ref{Ustar}(i), Equality in \eqref{Fge} holds if and only if $u=U_\ast$. This completes the proof.
\end{proof}

\begin{remark}
Interestingly, the above results on the minimizers of the free energy bear similarities to the corresponding results established for the Keller-Segel equation in the critical and subcritical regimes \cite{CCH17,CCH21,CCV15,CHM18,CGH20}.
\end{remark}

\section{Global existence and finite-time blow-up} \label{sec4}

In this section, we use the characterization of $U_\ast$ to investigate dynamical solutions to \eqref{tfsystem} in the supercritical regime $1+2/d<m<(d+2)/(d-2)$. We establish a sharp threshold determined by the $L^{m+1}$ norm of $U_\ast$, such that 
solutions blow up in finite time if the initial data lies above this threshold (Section \ref{sub42}), whereas solutions exist globally if the initial data lies below the threshold (Section \ref{sub41}).

\subsection{A priori estimates for $F(u_0)<F(U_\ast)$}

We first derive a priori estimates under the energy condition $F(u_0)<F(U_\ast)$.

\begin{proposition}\label{belowabove}
Let $1+\frac{2}{d}<m < \frac{d+2}{d-2}, \alpha=\frac{(d+2)-(d-2)m}{dm}$, and $U_\ast$ be the unique optimizer from Theorem \ref{Ustar}(i). Assume that $u(x,t)$ is a weak solution to \eqref{tfsystem} with an initial condition $u_0$ satisfying \eqref{initial}, \eqref{mass} and
\begin{align}\label{Fu0}
 F(u_0)<F(U_\ast).
\end{align}
\begin{enumerate}
  \item[(i)] If
  \begin{align}\label{xiaoyum1}
  \|u_0\|_{m+1}< \|U_\ast\|_{m+1}, 
  \end{align}
  then there exists a constant $\mu_1<1$ such that the corresponding weak solution $u$ satisfies
     \begin{align}\label{xiaoyum}
       \|u(\cdot,t)\|_{m+1} < \mu_1 \|U_\ast\|_{m+1}
    \end{align}
for all $t>0$.
  \item[(ii)] If
  \begin{align}\label{dayum1}
  \|u_0\|_{m+1} > \|U_\ast\|_{m+1},
  \end{align}
  then there exists a constant $\mu_2>1$ such that the corresponding weak solution $u$ satisfies
\begin{align}\label{dayum}
     \|u(\cdot,t)\|_{m+1} > \mu_2 \|U_\ast\|_{m+1}
\end{align}
 for all $t>0$.
\end{enumerate}
\end{proposition}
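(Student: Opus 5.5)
This is the standard potential-well argument built on the function $g$ from the proof of Proposition \ref{picture2}. For $x\ge 0$ set
\begin{align*}
g(x)=\frac{1}{2C_\ast M^\alpha}x^{\alpha+2}-\frac{1}{m+1}x^{m+1}.
\end{align*}
By mass conservation \eqref{mass} and the GNS inequality \eqref{GNS1}, every weak solution satisfies $F(u(\cdot,t))\ge g(\|u(\cdot,t)\|_{m+1})$. Combining this with energy dissipation \eqref{Fu} gives, for all $t>0$,
\begin{align*}
g(\|u(\cdot,t)\|_{m+1})\le F(u(\cdot,t))\le F(u_0)<F(U_\ast).
\end{align*}
For a weak solution in the sense of Definition \ref{weakdefine} we only use the energy inequality $F(u(t))\le F(u_0)$. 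I take this as part of the solution concept (it is inherited from the approximation scheme), and this is the one point that needs care.

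The first step is the shape of $g$. In the regime $1+2/d<m<(d+2)/(d-2)$ we have $m+1>\alpha+2$. Hence $g$ increases on $(0,x_\ast)$ and decreases on $(x_\ast,\infty)$, with its unique maximum at the point where $g'(x)=0$, namely
\begin{align*}
x_\ast^{m-\alpha-1}=\frac{\alpha+2}{2C_\ast M^\alpha}.
\end{align*}
By \eqref{Pstar} this gives $x_\ast=P_\ast=\|U_\ast\|_{m+1}$, using \eqref{MPstar}. Moreover $g(P_\ast)=F(U_\ast)$ by the computation in \eqref{Fmin}.

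Since $F(u_0)<g(P_\ast)$, the equation $g(x)=F(u_0)$ has at most two positive roots, and the sublevel set $\{x\ge 0: g(x)\le F(u_0)\}$ avoids a neighborhood of $P_\ast$. More precisely, by continuity and strict monotonicity, there are $\mu_1<1<\mu_2$ such that $g(x)>F(u_0)$ for all $x\in[\mu_1P_\ast,\mu_2P_\ast]$. One can take $\mu_1P_\ast$ to be the root of $g=F(u_0)$ below $P_\ast$, or $0$ if $F(u_0)<0$ makes no such root exist on $[0,P_\ast)$; case (i) then still works with any $\mu_1\in(\|u_0\|_{m+1}/P_\ast,1)$. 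Likewise $\mu_2P_\ast$ is the root above $P_\ast$, which exists because $g\to-\infty$.

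Consequently $\|u(\cdot,t)\|_{m+1}$ never enters $[\mu_1P_\ast,\mu_2P_\ast]$ for any $t\ge0$. In case (i) we have $\|u_0\|_{m+1}<P_\ast$, so in fact $\|u_0\|_{m+1}<\mu_1P_\ast$. In case (ii) we have $\|u_0\|_{m+1}>\mu_2P_\ast$.

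The final step is a continuity argument. Since $t\mapsto\|u(\cdot,t)\|_{m+1}$ is continuous, it cannot jump across the forbidden interval, so it stays on the side where it started, which gives \eqref{xiaoyum} and \eqref{dayum}. Strictness follows because the norm never attains the endpoints.

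The main obstacle is this time continuity for weak solutions, since Definition \ref{weakdefine} only gives $L^\infty$ in time. I would recover it as follows. The weak formulation \eqref{weak} gives weak continuity of $u(t)$ in distributions, and the uniform bounds in $H^1\cap L^1$ upgrade this to strong continuity in $L^{m+1}_{loc}$ through an Aubin--Lions type compactness, with the second-moment bound controlling the tails. If this proves delicate, I would instead run the argument at the level of the approximating problems, where the solutions are smooth and continuity is clear, and then pass the strict bounds to the limit. The margins $\mu_1<1<\mu_2$ are uniform, since they depend only on $F(u_0)$ and $M$, so they survive the limit.
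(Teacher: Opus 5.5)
Your proposal is correct and is essentially the paper's argument. The GNS bound $F(u(\cdot,t))\ge g(\|u(\cdot,t)\|_{m+1})$, energy monotonicity, and the unimodal shape of $g$ with maximum $g(P_\ast)=F(U_\ast)$ at $P_\ast=\|U_\ast\|_{m+1}$ together give a forbidden band around $P_\ast$. The paper gets its margin by choosing $\delta<1$ with $F(u_0)<\delta F(U_\ast)$, which is equivalent to your continuity argument for $g$; note that you should take $\mu_1P_\ast$ strictly above the lower root of $g=F(u_0)$, not equal to it, so that $g>F(u_0)$ holds on the closed interval. The paper also silently uses the energy inequality for weak solutions and the time continuity of $t\mapsto\|u(\cdot,t)\|_{m+1}$. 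Your explicit handling of these two points fills in steps the paper leaves implicit rather than changing the route.
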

\begin{proof}
First, applying the GNS inequality \eqref{GNS1}, we infer from the expression of $F(u)$ that
\begin{align}\label{06081}
F(u) & = \frac{1}{2} \|\nabla u\|_2^2 -\frac{1}{m+1} \|u\|_{m+1}^{m+1} \nonumber \\
  & \ge \frac{1}{2C_* M^\alpha} \|u\|_{m+1}^{\alpha+2}-\frac{1}{m+1} \|u\|_{m+1}^{m+1}.
\end{align}
Moreover, substituting the optimizer $U_\ast$ into $F(u)$ yields
\begin{align}\label{06082}
F(U_\ast)& =\frac{1}{2} \|\nabla U_\ast\|_2^2-\frac{1}{m+1} \|U_\ast\|_{m+1}^{m+1} \nonumber \\
& = \frac{1}{2C_* M^\alpha} \|U_\ast\|_{m+1}^{\alpha+2}-\frac{1}{m+1} \|U_\ast\|_{m+1}^{m+1}.
\end{align}
We define the auxiliary function
\begin{align}\label{gx}
g(x)=\frac{1}{2C_* M^\alpha} x^{\alpha+2}-\frac{1}{m+1} x^{m+1}.
\end{align}
By combining \eqref{06081} and \eqref{06082}, we get
\begin{align}
g \left( \|U_\ast\|_{m+1} \right) =F(U_\ast) \ge g \left( \|u\|_{m+1} \right)
\end{align}
for all $u \in L^1 \cap L^{m+1}(\R^d)$. Now, under the condition
\begin{align}\label{414}
F\left(u_0 \right)< F\left(U_\ast \right),
\end{align}
there exists a constant $0<\delta<1$ such that
\begin{align}
F\left(u_0 \right)<\delta F\left(U_\ast \right).
\end{align}
Since $F(u)$ is non-increasing in time, it follows that
\begin{align}\label{0608starstar}
g\left( \|u \|_{m+1} \right)\le F(u) \le F(u_0) < \delta F\left(U_\ast \right)=\delta g \left( \|U_\ast\|_{m+1} \right).
\end{align}

On the other hand, since $\alpha+2<m+1$ for $m>1+2/d$, a direct computation shows that the maximum of $g(x)$ is attained at 
\begin{align*}
x_*=\left( \frac{\alpha+2}{2C_* M^\alpha} \right)^{\frac{1}{m-\alpha-1}}=\|U_\ast\|_{m+1}.
\end{align*}
This implies that $g(x)$ is strictly increasing for $x<\|U_\ast\|_{m+1}$. Therefore, if
\begin{align}
\|u_0\|_{m+1}< \|U_\ast\|_{m+1},
\end{align}
then \eqref{0608starstar} ensures that there exists $\mu_1<1$ (depending on $\delta$) such that, for all $t>0$,
\begin{align}
\|u \|_{m+1}< \mu_1 \|U_\ast\|_{m+1}.
\end{align}
Conversely, $g(x)$ is strictly decreasing for $x>\|U_\ast \|_{m+1}$. Thus, if
\begin{align}
\|u_0\|_{m+1}> \|U_\ast\|_{m+1},
\end{align}
using \eqref{0608starstar} again, we conclude that exists $\mu_2>1$ (depending on $\delta$) such that
\begin{align}\label{420}
\|u \|_{m+1}> \mu_2 \|U_\ast\|_{m+1}.
\end{align}
This completes the proof. 
\end{proof}

\begin{remark}\label{Fu0FUstar}
We note that if $F(u_0)<F(U_\ast)$, the case $\|u_0\|_{m+1}=\|U_\ast\|_{m+1}$ cannot occur. Indeed, it follows directly from Proposition \ref{picture}(i) that $F(u_0) \ge F(U_\ast)$ whenever $\|u_0\|_{m+1}=\|U_\ast\|_{m+1}=P_\ast$. 
\end{remark}

\subsection{Finite-time blow-up} \label{sub42}

In this subsection, we prove a finite-time blow-up result for \eqref{tfsystem} with large initial data. We employ the standard argument relying on the evolution of the second moment of solutions, following the approach developed in \cite{JL92}.

Let $u(x,t)$ be a weak solution to \eqref{tfsystem} with the maximal existence time $T_{max}$. We define the blow-up criterion
\begin{align}\label{blowcri}
&\|\nabla u(\cdot,t)\|_2 +\|u(\cdot,t)\|_{m+1}<\infty,\quad \text{for } 0<t<T_{max}, \nonumber\\
&\displaystyle \limsup_{ t \to T_{max}} \left(\|\nabla u(\cdot,t)\|_2 +\|u(\cdot,t)\|_{m+1}\right)=\infty.
\end{align}
We first introduce the second moment by
\begin{align}
m_2(t):=\int_{\R^d} |x|^2 u(x,t) dx,
\end{align}
and obtain the following identity.
\begin{lemma}
Let $u$ be a weak solution to \eqref{tfsystem} on $[0,T)$ for some $T \in (0,\infty]$. Then the second moment $m_2(t)$ satisfies
\begin{align}\label{m22t}
\frac{d}{dt} m_2(t)=2(d+2) F(u)-2 \frac{dm-(d+2)}{m+1} \int_{\R^d} u^{m+1} dx,\quad t \in [0,T].
\end{align}
\end{lemma}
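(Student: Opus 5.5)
The plan is to test the weak formulation \eqref{weak} with the (truncated) weight $\psi(x)=|x|^2$. For this weight $\nabla\psi=2x$ and $\Delta\psi=2d$. At a formal level this gives
\begin{align*}
\frac{d}{dt}m_2(t)=2\int_{\R^d} x\cdot u\nabla\Delta u\,dx-\frac{2dm}{m+1}\int_{\R^d}u^{m+1}\,dx .
\end{align*}
The fourth-order term will be rewritten through two integrations by parts. First,
\begin{align*}
\int_{\R^d} x\cdot u\nabla\Delta u\,dx=-\int_{\R^d}\Delta u\,\nabla\cdot(xu)\,dx=-d\int_{\R^d}u\Delta u\,dx-\int_{\R^d}\Delta u\,(x\cdot\nabla u)\,dx .
\end{align*}
Then I use the standard identities $-\int u\Delta u=\|\nabla u\|_2^2$ and the Pohozaev/Rellich identity $\int \Delta u\,(x\cdot\nabla u)=\frac{d-2}{2}\|\nabla u\|_2^2$. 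Together these give $\int x\cdot u\nabla\Delta u=\frac{d+2}{2}\|\nabla u\|_2^2$, and hence
\begin{align*}
\frac{d}{dt}m_2=(d+2)\|\nabla u\|_2^2-\frac{2dm}{m+1}\|u\|_{m+1}^{m+1}.
\end{align*}
Finally I substitute $\|\nabla u\|_2^2=2F(u)+\frac{2}{m+1}\|u\|_{m+1}^{m+1}$ from \eqref{Fu}. This gives $2(d+2)F(u)+\frac{2(d+2)-2dm}{m+1}\|u\|_{m+1}^{m+1}$, which is exactly \eqref{m22t}.

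To make this rigorous, I will not use $|x|^2$ directly. Instead I take $\psi=|x|^2\psi_R(x)$, with $\psi_R$ the cut-off from the proof of Proposition \ref{prop1}, insert it in \eqref{weak}, and pass to the limit $R\to\infty$. The second-order term causes no difficulty: $\Delta(|x|^2\psi_R)$ is bounded uniformly in $R$ and converges to $2d$ pointwise, and $u^{m+1}\in L^\infty(0,T;L^1)$, so dominated convergence applies. For the fourth-order term I note that $|\nabla(|x|^2\psi_R)|\le C|x|$ uniformly in $R$, and then apply Cauchy--Schwarz:
\begin{align*}
\int_0^t\!\!\int_{\R^d}|x|\,u|\nabla\Delta u|\,dx\,ds\le \Big(\int_0^t m_2(s)\,ds\Big)^{1/2}\big\|u^{1/2}\nabla\Delta u\big\|_{L^2(0,t;L^2)} .
\end{align*}
This shows the term is integrable once $m_2$ is known to be locally bounded in time. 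That bound itself comes from a Gronwall-type estimate obtained from the same truncated identity, starting from $m_2(0)<\infty$ in \eqref{initial}. With it in hand, dominated convergence yields the integrated-in-time version of the identity. Since the right-hand side lies in $L^1_{loc}$ in time, $m_2$ is absolutely continuous and the differential form \eqref{m22t} holds for a.e. $t$.

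The main obstacle is the conversion of $\int x\cdot u\nabla\Delta u$ into $\frac{d+2}{2}\|\nabla u\|_2^2$ at the regularity of Definition \ref{weakdefine}. The Rellich identity formally needs $u\in H^2$ with enough decay, while the definition only controls $u^{1/2}\nabla\Delta u$ and $\nabla u$. My first attempt will be to carry out the integrations by parts on the truncated integrand $\nabla\psi\cdot u\nabla\Delta u$. The extra boundary-type terms then involve $\nabla\psi_R$ and $D^2\psi_R$, are supported in $R\le|x|\le2R$, and carry compensating factors of $R^{-1}$ and $R^{-2}$. I will show they vanish as $R\to\infty$ using $u\in L^\infty(0,T;H^1)$ and the finiteness of $m_2$. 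If this does not close at the weak-solution level, I will prove the identity for the regularized approximate solutions used to construct weak solutions (which are smooth and positive) and then pass to the limit. I expect lower semicontinuity of $\|\nabla u\|_2$ to be the delicate point there, and I would fall back on an inequality version of \eqref{m22t}; an inequality is all the subsequent blow-up argument requires.
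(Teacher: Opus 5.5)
Your formal computation is exactly the paper's proof: test with $|x|^2$, write the fourth-order contribution as $-2d\int u\Delta u\,dx-2\int (x\cdot\nabla u)\Delta u\,dx$, use $2\int (x\cdot\nabla u)\Delta u\,dx=(d-2)\|\nabla u\|_2^2$ to reach $(d+2)\|\nabla u\|_2^2-\frac{2dm}{m+1}\|u\|_{m+1}^{m+1}$, and rewrite in terms of $F(u)$. The paper does these integrations by parts purely formally and never addresses the cut-off or regularity issues you raise, so your justification plan goes beyond it; one caution is that if you fall back to an inequality from approximate solutions, it must come from the energy bound $F(u^\varepsilon(s))\le F(u^\varepsilon_0)$, because lower semicontinuity of $\|\nabla u\|_2$ gives the inequality in the opposite direction to the upper bound on $m_2$ that the blow-up argument needs.
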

\begin{proof}
Integrating by parts in \eqref{tfsystem}, we compute
\begin{align}
\frac{d}{dt} m_2(t) &= 2 \int_{\R^d} x \cdot (u \nabla (\Delta u)) dx+\frac{2m}{m+1} \int_{\R^d} x \cdot \nabla u^{m+1} dx \nonumber \\
&=-2d \int_{\R^d} u \Delta u dx-2\int_{\R^d} (x \cdot \nabla u) \Delta u dx-\frac{2dm}{m+1} \int_{\R^d} u^{m+1} dx \nonumber \\
&=2d \int_{\R^d} |\nabla u|^2 dx-(d-2) \int_{\R^d} |\nabla u|^2 dx  -\frac{2dm}{m+1} \int_{\R^d} u^{m+1} dx \nonumber \\
&=(d+2) \int_{\R^d} |\nabla u|^2 dx-\frac{2dm}{m+1} \int_{\R^d} u^{m+1} dx \nonumber \\
&=2(d+2) F(u)-2 \frac{dm-(d+2)}{m+1} \int_{\R^d} u^{m+1} dx,
\end{align}
where we have used
\begin{align*}
2\int_{\R^d} (x \cdot \nabla u) \Delta u dx &=-2 \int_{\R^d} \nabla(x \cdot \nabla u) \cdot \nabla u dx \\
&=-2 \int_{\R^d} |\nabla u|^2 dx-2 \int_{\R^d} x \cdot D^2 u \nabla u dx \\
&=-2 \int_{\R^d} |\nabla u|^2 dx-\int_{\R^d} x \cdot \nabla (|\nabla u|^2)dx \\
&=(d-2) \int_{\R^d} |\nabla u|^2 dx.
\end{align*}
This completes the proof.
\end{proof}

We now state our finite-time blow-up result.
\begin{theorem}(Finite-time blow-up)\label{finiteblowup}
Let $1+2/d<m<(d+2)/(d-2)$. Let $u(x,t)$ be a weak solution defined in Definition \ref{weakdefine} on $[0,T_w)$. Under assumptions \eqref{initial}, \eqref{Fu0} and \eqref{dayum1}, then $T_w<\infty$ and $u(x,t)$ blows up in finite time, in the sense that $\displaystyle \limsup_{t \to T_w}\|u(\cdot,t)\|_{m+1}=\infty$.
\end{theorem}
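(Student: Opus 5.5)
The plan is the classical virial (second-moment) argument, fed by the lower bound on $\|u(\cdot,t)\|_{m+1}$ from Proposition \ref{belowabove}(ii). Suppose, for contradiction, that the weak solution exists on $[0,T_w)$ with $T_w=\infty$. Alternatively, suppose $T_w<\infty$ but $\|u(\cdot,t)\|_{m+1}$ stays bounded, so that the solution can be continued by the blow-up criterion \eqref{blowcri}. In either case the identity \eqref{m22t} holds on the whole interval of existence. We then show that $m_2(t)$ must become negative in finite time.

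\textbf{Key estimate.} First, by energy dissipation \eqref{Fu} we have $F(u(\cdot,t))\le F(u_0)$. Second, Proposition \ref{belowabove}(ii) gives $\|u(\cdot,t)\|_{m+1}>\mu_2\|U_\ast\|_{m+1}$ for all $t$. Inserting both into \eqref{m22t} yields
\begin{align*}
\frac{d}{dt}m_2(t)\le 2(d+2)F(u_0)-2\frac{dm-(d+2)}{m+1}\mu_2^{m+1}\|U_\ast\|_{m+1}^{m+1}.
\end{align*}
By Corollary \ref{Fus}, $F(U_\ast)=\frac{dm-(d+2)}{(d+2)(m+1)}\|U_\ast\|_{m+1}^{m+1}$, so the right-hand side equals
\begin{align*}
2(d+2)\bigl(F(u_0)-\mu_2^{m+1}F(U_\ast)\bigr).
\end{align*}
Since $F(u_0)<F(U_\ast)$, $\mu_2>1$ and $F(U_\ast)>0$, this is a strictly negative constant $-\kappa$. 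Hence $m_2(t)\le m_2(0)-\kappa t$. Here $m_2(0)<\infty$ by \eqref{initial}, and $u\ge0$ forces $m_2\ge0$. This is impossible for $t>m_2(0)/\kappa$. Therefore the maximal existence time satisfies $T_w\le m_2(0)/\kappa<\infty$.

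\textbf{Identifying the blow-up quantity.} By \eqref{blowcri}, $\|\nabla u\|_2+\|u\|_{m+1}$ is unbounded as $t\to T_w$. Energy monotonicity gives
\begin{align*}
\tfrac12\|\nabla u\|_2^2\le F(u_0)+\tfrac1{m+1}\|u\|_{m+1}^{m+1},
\end{align*}
so unboundedness of the gradient forces unboundedness of the $L^{m+1}$ norm. Thus $\limsup_{t\to T_w}\|u(\cdot,t)\|_{m+1}=\infty$.

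\textbf{Main obstacle.} The delicate point is rigor at the level of weak solutions. The identity \eqref{m22t} requires testing \eqref{weak} with $\psi=|x|^2$, which is not compactly supported. I would use $\psi=|x|^2\psi_R$ with the cutoff $\psi_R$ from the proof of Proposition \ref{prop1}. The error terms are controlled by the finite second moment, the bounds $u\in L^\infty(H^1\cap L^{m+1})$, and $u^{1/2}\nabla\Delta u\in L^2$, using Cauchy--Schwarz $\bigl|\int \nabla\psi\cdot u\nabla\Delta u\bigr|\le \|\sqrt u\,\nabla\psi\|_2\|\sqrt u\nabla\Delta u\|_2$, and then we let $R\to\infty$. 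One also needs the energy inequality $F(u(t))\le F(u_0)$ to hold for the weak solution, and propagation of the finite second moment on bounded time intervals. I would take both as properties of the constructed weak solutions, justified by the approximation scheme. Following \cite{JL92}, it suffices to argue in integrated form, $m_2(t)=m_2(0)+\int_0^t(\cdots)\,ds$, which avoids pointwise differentiability.
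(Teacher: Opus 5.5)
Your proposal is correct, and its first half is the paper's own argument. You insert $F(u(t))\le F(u_0)<F(U_\ast)$ and the lower bound of Proposition~\ref{belowabove}(ii) into \eqref{m22t} to get $\frac{d}{dt}m_2\le-\kappa<0$. You carry the factor $\mu_2^{m+1}$ correctly; the paper writes $\mu_2$, which is harmless since $\mu_2^{m+1}>\mu_2$.

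The two routes differ in how they identify the blow-up quantity. The paper asserts that $m_2(t)\to0$ at some finite $T$. It then uses the interpolation $M\le c\,\|u\|_{m+1}^{\frac{2(m+1)}{dm+2(m+1)}}\,m_2^{\frac{dm}{dm+2(m+1)}}$ (H\"older on $B_R$, second moment outside) to force $\|u\|_{m+1}\to\infty$, and only afterwards deduces $\|\nabla u\|_2\to\infty$ from \eqref{GNS1}. You use $m_2\ge0$ only to get $T_w\le m_2(0)/\kappa$. You then transfer the unboundedness in \eqref{blowcri} from $\|\nabla u\|_2$ to $\|u\|_{m+1}$ through the energy inequality.

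The paper's route gives a quantitative rate, $\|u(t)\|_{m+1}\gtrsim m_2(t)^{-\frac{dm}{2(m+1)}}$, and needs no continuation theorem. However, it tacitly assumes the solution survives until $m_2$ actually vanishes. Your route also covers the case where existence ends earlier. The price is that you treat \eqref{blowcri} as a genuine continuation property of weak solutions, which the paper would need in that case as well.

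Your cut-off justification of \eqref{m22t} addresses a point the paper handles only formally, but removing the cut-off error does not finish it. Turning $2\int x\cdot u\nabla\Delta u\,dx$ into $(d+2)\|\nabla u\|_2^2$ still requires second-derivative control beyond Definition~\ref{weakdefine}, which neither you nor the paper supply.
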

\begin{proof}
%Here we show the formal computations, the passing to the limit from the approximated problem \eqref{kseps} can be done according to \cite[Theorem 2.11]{BL13} without any further complications. 

The key tool is the time evolution of the second moment. Recalling that $m>1+2/d$, we use \eqref{m22t} together with \eqref{Fu0} and estimate \eqref{dayum} from Proposition \ref{belowabove} (with some $\mu_2>1$) to obtain
\begin{align}\label{m2t}
\frac{d}{dt} m_2(t) & < 2(d+2) F(u_0)-2 \mu_2 \frac{dm-(d+2)}{m+1} \int_{\R^d} U_\ast^{m+1} dx \nonumber \\
&<2(d+2) F(U_\ast)-2 \mu_2 \frac{dm-(d+2)}{m+1} \int_{\R^d} U_\ast^{m+1} dx \nonumber \\
&=2(1-\mu_2)\frac{dm-(d+2)}{m+1} \|U_\ast\|_{m+1}^{m+1}<0.
\end{align}
Here we have used the fact that $F(u)$ is non-increasing in time and Corollary \ref{Fus}. It then follows from \eqref{m2t} that there exists $T>0$ such that $\displaystyle \lim_{t \to T} m_2(t)=0$.

Next, by H\"{o}lder's inequality we have
\begin{align}
\int_{\R^d} u(x) dx =& \int_{|x| \le R} u(x) dx+\int_{|x|>R} u(x) dx \nonumber \\
\le & C R^{\frac{dm}{m+1}} \|u\|_{m+1} +\frac{1}{R^2} m_2(t).
\end{align}
Choosing $R=\left( \frac{C m_2(t)}{\|u\|_{m+1}}  \right)^{\frac{m+1}{dm+2(m+1)}}$ results in
\begin{align}
M \le c \|u\|_{m+1}^{\frac{2(m+1)}{dm+2(m+1)}} m_2(t)^{\frac{dm}{dm+2(m+1)}},
\end{align}
which implies
\begin{align}
\displaystyle \limsup_{t \to T} \|u(\cdot,t)\|_{m+1} \ge \displaystyle \lim_{t \to T} M^{\frac{dm+2(m+1)}{2(m+1)}} m_2(t)^{-\frac{dm}{2(m+1)}}=\infty.
\end{align}
Since $1<m+1<\frac{2d}{d-2}$, we further conclude that
\begin{align}
\displaystyle \limsup_{t \to T} \|\nabla u\|_2=\infty
\end{align}
by virtue of the GNS inequality \eqref{GNS1}. Thus the proof is completed. 
\end{proof}

\subsection{Global existence} \label{sub41}

In this subsection, we present a global existence result for \eqref{tfsystem} with small initial data.

\begin{theorem}(Global existence)\label{globalexistence}
Let $1+2/d<m<(d+2)/(d-2)$. Assume that \eqref{initial}, \eqref{Fu0} and \eqref{xiaoyum1} hold. Then there exists a weak solution to \eqref{tfsystem} such that for all $0<t<\infty$,
\begin{align}
\|u(\cdot,t)\|_{m+1}+\|\nabla u(\cdot,t)\|_2 \le C\left(\|u_0\|_1,\|\nabla u_0\|_{2} \right)
\end{align}
and 
\begin{align}
\int_0^t \int_{\R^d} u \left|\nabla(\Delta u + u^m)\right|^2 dxds \le C\left(\|u_0\|_1,\|\nabla u_0\|_{2} \right).
\end{align}
Moreover, we have
\begin{align}\label{m2blow}
\displaystyle \lim_{t \to \infty} m_2(t)=\infty.
\end{align}
\end{theorem}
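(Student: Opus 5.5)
We propose to prove Theorem \ref{globalexistence} in three stages: uniform a priori bounds from Proposition \ref{belowabove}(i), construction of a weak solution by approximation, and divergence of the second moment via the virial identity \eqref{m22t}.

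\textbf{Stage 1: uniform a priori bounds.} Proposition \ref{belowabove}(i), which applies to any weak solution under \eqref{Fu0} and \eqref{xiaoyum1}, gives
\begin{align*}
\|u(\cdot,t)\|_{m+1}<\mu_1\|U_\ast\|_{m+1}=\mu_1 P_\ast .
\end{align*}
Monotonicity of the free energy then yields
\begin{align*}
\tfrac12\|\nabla u(\cdot,t)\|_2^2\le F(u_0)+\tfrac{1}{m+1}\mu_1^{m+1}P_\ast^{m+1}.
\end{align*}
This bounds the gradient in terms of $\|u_0\|_1$, $\|\nabla u_0\|_2$ and $\|u_0\|_{m+1}$; note that $P_\ast$ depends only on $M$, and $\|u_0\|_{m+1}$ is itself controlled through GNS \eqref{GNS1}. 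Integrating the dissipation identity in \eqref{Fu} in time and using the lower bound $F(u)\ge g(\|u\|_{m+1})\ge \min_{[0,P_\ast]}g$ from \eqref{gx}, we get
\begin{align*}
\int_0^t\!\int u|\nabla(\Delta u+u^m)|^2\le F(u_0)-\inf g,
\end{align*}
uniformly in $t$.

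\textbf{Stage 2: existence.} This is the main obstacle. We will regularize, for instance by replacing the mobility $u$ with $u+\varepsilon$, mollifying $u_0$, and working on large balls or using the minimizing-movement scheme of \cite{Jose24}. The approximate problems admit the same energy structure, so the bounds of Stage 1 hold uniformly in $\varepsilon$ as long as $F(u_{0,\varepsilon})<F(U_\ast)$ and $\|u_{0,\varepsilon}\|_{m+1}<\|U_\ast\|_{m+1}$. Both strict inequalities survive small perturbations.

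The solutions are then uniformly bounded in $L^\infty(0,T;H^1)$ with $u^{1/2}\nabla\Delta u$ bounded in $L^2$ after splitting off $u^{1/2}\nabla u^m$, which is controlled by $H^1$ and $L^{m+1}$ bounds via interpolation. For time regularity we bound $\partial_t u$ in $L^2(0,T;(W^{1,\infty})')$, since the flux $u\nabla\Delta u=u^{1/2}\cdot u^{1/2}\nabla\Delta u$ lies in $L^2(0,T;L^1)$. An Aubin--Lions argument gives strong $L^2_{loc}$ convergence, and the uniform bound on the second moment (from \eqref{m22t} on finite intervals) prevents mass loss at infinity. We then pass to the limit in \eqref{weak}; the nonlinear term $u^{m+1}$ converges by subcriticality $m+1<2d/(d-2)$. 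The delicate point is identifying the weak limit of $u^{1/2}\nabla\Delta u$, and here we would follow the standard thin-film approach of Bernis--Friedman and Beretta--Bertsch--Dal Passo, writing the flux as $\nabla\cdot(u\nabla^2u)$-type terms tested against $\nabla\psi$. Lower semicontinuity preserves the bounds.

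\textbf{Stage 3: divergence of the second moment.} By \eqref{m22t},
\begin{align*}
\frac{d}{dt}m_2=(d+2)\|\nabla u\|_2^2-\frac{2dm}{m+1}\|u\|_{m+1}^{m+1}.
\end{align*}
Applying \eqref{GNS1}, $\|\nabla u\|_2^2\ge \|u\|_{m+1}^{\alpha+2}/(C_\ast M^\alpha)$. Substituting the definition of $P_\ast$ gives
\begin{align*}
\frac{d}{dt}m_2\ge \frac{2dm}{m+1}\|u\|_{m+1}^{\alpha+2}\bigl(P_\ast^{m-\alpha-1}-\|u\|_{m+1}^{m-\alpha-1}\bigr)\ge c\,\|u\|_{m+1}^{\alpha+2}
\end{align*}
with $c>0$ by Stage 1. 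Since $\|u\|_{m+1}$ may tend to $0$, we instead use the alternative form
\begin{align*}
\frac{d}{dt}m_2=2(d+2)F(u)-2\tfrac{dm-(d+2)}{m+1}\|u\|_{m+1}^{m+1}.
\end{align*}
Combining the two expressions gives
\begin{align*}
\frac{d}{dt}m_2\ge(1-\mu_1^{m-\alpha-1})(d+2)\|\nabla u\|_2^2.
\end{align*}
By the Nash/GNS-type inequality $M\le C\|\nabla u\|_2^{a}\,m_2^{b}$ with $a,b>0$ (from Hölder plus Sobolev, as in the proof of Theorem \ref{finiteblowup}), we get $\dot m_2\ge c\,m_2^{-\gamma}$ for some $\gamma>0$. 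This forces $m_2\to\infty$. Because this last step must hold at the level of weak solutions, we expect Stage 2, namely passing to the limit in the flux while preserving the energy inequality, to be the hardest part.
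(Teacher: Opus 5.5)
Your Stages 1 and 2 follow the paper's Step 1: the barrier from Proposition \ref{belowabove}(i), energy monotonicity for the gradient and dissipation bounds, and deferral of the construction to an approximation or minimizing-movement scheme as in \cite{Jose24}. The genuine difference is your proof of \eqref{m2blow}.

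The paper argues by contradiction. If $m_2(t_k)$ stays bounded along some $t_k\to\infty$, it extracts a limit $u_\infty$ of full mass $M$, using strong $L^1$ and $L^{m+1}$ convergence with the moment bound preventing escape to infinity. It then uses integrability of the dissipation to identify $u_\infty$ as a steady state. This contradicts Proposition \ref{picture}(ii), since lower semicontinuity gives $F(u_\infty)\le F(u_0)<F(U_\ast)$.

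You instead insert GNS and the uniform barrier $\|u\|_{m+1}<\mu_1P_\ast$ directly into the virial identity \eqref{m22t}. Using $\frac{\alpha+2}{2}=\frac{(d+2)(m+1)}{2dm}$ and $m-\alpha-1=\frac{(m+1)(dm-d-2)}{dm}>0$, this gives
\begin{align*}
\frac{d}{dt}m_2\ge (d+2)\bigl(1-\mu_1^{m-\alpha-1}\bigr)\|\nabla u\|_2^2\ge c\,M^{\frac{d+6}{2}}\,m_2^{-\frac{d+2}{2}}.
\end{align*}
The last step is $M\le CR^{\frac{d+2}{2}}\|\nabla u\|_2+R^{-2}m_2$ optimized in $R$, whence $m_2(t)^{(d+4)/2}\ge m_2(0)^{(d+4)/2}+c't$. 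Your worry that $\|u\|_{m+1}$ might tend to zero is unfounded. The H\"{o}lder bound in the proof of Theorem \ref{finiteblowup} gives $\|u\|_{m+1}\gtrsim m_2^{-dm/(2(m+1))}$, so your first inequality $\dot m_2\ge c\|u\|_{m+1}^{\alpha+2}$ already yields the same $m_2^{-(d+2)/2}$ bound.

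What each route buys:
\begin{itemize}
\item Your route is quantitative, giving $m_2(t)\gtrsim t^{2/(d+4)}$. It also bypasses the delicate part of the paper's argument: showing that a limit along $t_k$ is a steady state. That step requires passing to the limit in a degenerate fourth-order dissipation at times where both $m_2$ and the dissipation are controlled, and the paper only sketches it.
\item Its price is that \eqref{m22t} must hold for the constructed solution, at least in integrated form with $m_2$ absolutely continuous. This needs more regularity than Definition \ref{weakdefine} provides. The paper does state that lemma for weak solutions, so cite it explicitly rather than treat it as automatic.
\item The paper's route instead showcases $U_\ast$ as the minimal-energy steady state, in line with its theme, but gives only a qualitative conclusion.
\end{itemize}

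Two minor points in Stage 2, which the paper also glosses over:
\begin{itemize}
\item A bound on $u^{1/2}\nabla u^m$ in $L^2_{t,x}$ does not follow from $L^\infty_t(H^1\cap L^{m+1})$ by interpolation when $d\ge 3$. Concentrating profiles $\lambda^{(d-2)/2}\phi(\lambda x)$ keep those norms bounded while $\int u^{2m-1}|\nabla u|^2\to\infty$. You need extra regularity, e.g.\ flow-interchange estimates.
\item In a discrete minimizing-movement scheme, the barrier is not preserved by continuity. Constrain each step to $\|u\|_{m+1}\le P_\ast$ and note that the constraint is inactive: $g(\|u^{k+1}\|_{m+1})\le F(u^{k+1})\le F(u_0)<F(U_\ast)$ keeps $\|u^{k+1}\|_{m+1}$ uniformly below $P_\ast$.
\end{itemize}
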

\begin{proof}
The proof can be divided into two parts. Step 1 establishes the global existence of weak solutions. Step 2 shows the blow-up of the second moment and thereby proves \eqref{m2blow}.

{\it\textbf{Step 1}} (Global existence) \quad Under assumption \eqref{xiaoyum1}, it follows from \eqref{xiaoyum} in Proposition \ref{belowabove} that for any $t>0$, there exists $\mu_1<1$ such that
\begin{align}\label{260109}
\|u(\cdot,t)\|_{m+1}<\mu_1 \|U_\ast\|_{m+1}.
\end{align}
Furthermore, for any $t>0$, the non-increasing of the free energy $F(u)$ in time implies that
\begin{align}\label{260110}
\frac{1}{2} \|\nabla u(\cdot,t)\|_2^2  \le \frac{1}{m+1} \|u(\cdot,t)\|_{m+1}^{m+1}+ F(u_0) \le C\left(\|U_\ast\|_{m+1},F(u_0)\right),
\end{align}
and 
\begin{align}
\int_0^t \int_{\R^d} u \left|\nabla(\Delta u + u^m)\right|^2 dxds \le F(u_0)+\frac{1}{m+1} \|u(\cdot,t)\|_{m+1}^{m+1} \le C\left(\|U_\ast\|_{m+1},F(u_0)\right).
\end{align}
By combining the Sobolev embedding theorem with H\"{o}lder's inequality for $1<r<\frac{2d}{d-2}$, we infer from \eqref{260110} that
\begin{align}
\| u\|_{r} \le C, \text{ for any } 1 \le r \le \frac{2d}{d-2}.
\end{align}
Therefore, for any $T>0$, the following uniform regularity estimates hold:
\begin{align}
&u \in L^\infty(0,T;L^1 \cap H^1 (\R^d)), \label{11} \\
&u \in L^\infty(0,T;L^{r}(\R^d)),\text{ for any } 1 \le r \le 2d/(d-2), \label{22} \\
&u^{1/2} \nabla \Delta u \in L^2(0,T;L^2(\R^d)), \nonumber \\
&u^{1/2} \nabla u^m \in L^2(0,T;L^2(\R^d)). \nonumber
\end{align}
On the other hand, the second moment satisfies
\begin{align}\label{m2bdd}
m_2(t) \le m_2(0)+2(d+2) F(u_0)t, \quad \text{for any } 0<t<T. 
\end{align}
Thus far, uniform bounds on the competing terms in the free energy have been established, which serve as the key ingredient to exploit the gradient-flow structure of \eqref{tfsystem}. Following the argument framework in Section 4 of \cite{Jose24}, for $1+2/d<m<(d+2)/(d-2)$, we can establish the existence of a weak solution to \eqref{tfsystem} on $[0,T]$ for any $0<T<\infty$. The proof relies on the JKO scheme developed therein. Since our estimates yield exactly the required assumptions for that approach, the remaining steps including discrete approximation energy estimates and passage to the limit follow verbatim. We thus omit the repetitive details.

{\it\textbf{Step 2}} (Proof of \eqref{m2blow}) \quad  Suppose, for contradiction, that there exists an increasing sequence of times $\{t_k\}_{k \in \mathbb{N}} \to \infty$ such that 
\begin{align}
m_2(t_k)=\int_{\R^d} |x|^2 u(x,t_k) dx
\end{align}
is bounded. Since $u \in L^1(\R^d) \cap H^1(\R^d)$, we first claim that there exist a subsequence (still denoted $t_k$) and a function $u_\infty$ such that as $k \to \infty$,
\begin{align}
& u(\cdot,t_k) \rightharpoonup u_\infty \text{ in } H^1(\R^d), \nonumber \\
& u(\cdot,t_k) \rightarrow u_\infty \text{ in } L^1(\R^d), \label{L1} \\
& u(\cdot,t_k) \rightarrow u_\infty \text{ in } L^{m+1}(\R^d) \label{Lm1} 
\end{align}
and $\|u_\infty\|_1=M$. Moreover, the second moment of the limit satisfies
\begin{align}
0<\int_{\R^d} |x|^2 u_\infty dx \le \displaystyle \liminf_{k \to \infty}\int_{\R^d} |x|^2 u(\cdot, t_k) dx<\infty,
\end{align}
since concentration toward a Dirac delta at the origin is ruled out by the uniform bound \eqref{11}. 

On the other hand, a direct consequence of \eqref{260109} is that the free energy $F(u)$ is bounded from below:
\begin{align}
F(u_0)-\displaystyle \liminf_{k \to \infty} F(u)(t_k) =\lim_{k \to \infty} \int_0^{t_k} \left( \int_{\R^d} u(x,s) \left| \nabla \left(\Delta u+u^m  \right) \right|^2 dx \right) ds.
\end{align}
As a consequence, the Fisher information is integrable and 
\begin{align}
\lim_{k \to \infty} \int_{t_k}^\infty \left( \int_{\R^d} u(x,s) \left| \nabla \left(\Delta u+u^m  \right) \right|^2 dx \right) ds=0,
\end{align}
which implies that, up to the extraction of a subsequence, the limit $u_\infty$ as $k \to \infty$ satisfies
\begin{align}\label{260123}
\nabla(\Delta u_\infty+u_\infty^m)=0 \quad \text{a.e. in supp}(u_\infty).
\end{align}
This follows by arguments similar to those in the existence proof in Section 2 of \cite{BCM08}. Therefore, $u_\infty$ is a radially symmetric steady-state solution to \eqref{second} with mass $M$. By Proposition \ref{picture}(ii), we have
\begin{align}\label{Usinfty}
F(U_\ast) \le F(u_\infty).
\end{align}
However, \eqref{Lm1} yields 
\begin{align}
F(u_\infty) & =\frac{1}{2} \|\nabla u_\infty \|_{2}^2-\frac{1}{m+1} \|u_\infty\|_{m+1}^{m+1} \nonumber \\
& \le \displaystyle \liminf_{k \to \infty} \frac{1}{2} \|\nabla u(\cdot, t_k)\|_{2}^2-\frac{1}{m+1} \|u(\cdot,t_k)\|_{m+1}^{m+1} \nonumber \\
& = \displaystyle \liminf_{k \to \infty} F(u(\cdot,t_k)) \le F(u_0)<F(U_\ast),
\end{align}
which contradicts \eqref{Usinfty}. Hence, \eqref{m2blow} holds. 

To complete the proof of \eqref{m2blow}, it remains to justify the convergences \eqref{L1} and \eqref{Lm1} used above. We first establish \eqref{L1}. Let $0 \le \chi_R \le 1$ be a function in $C_0^\infty(\R^d)$ such that $\chi_R=1$ on $B_R(0)$ and $\chi_R=0$ on $\R^d\setminus B_{2R}(0)$. We decompose
\begin{align}
M=\int_{\R^d} u(\cdot,t_k) dx=\int_{\R^d} u(\cdot,t_k) \chi_R dx+\int_{\R^d} u(\cdot,t_k) (1-\chi_R) dx.
\end{align}
Then for sufficiently large $R$,
\begin{align}\label{largeR}
\int_{\R^d} u(\cdot,t_k) (1-\chi_R) dx \le \int_{|x| \ge R} u(\cdot, t_k) dx \le \frac{1}{R^2} \int_{|x| \ge R} |x|^2 u(\cdot,t_k) dx<\varepsilon,
\end{align}
while
\begin{align*}
\displaystyle \lim_{k \to \infty} \int_{\R^d} u(\cdot,t_k) \chi_R dx=\int_{\R^d} u_\infty \chi_R dx.
\end{align*}
Hence, we have
\begin{align}
\left| M-\displaystyle \lim_{k \to \infty} \int_{\R^d} u(\cdot,t_k) \chi_R dx \right|<\varepsilon, \quad \text{as } R \to \infty,
\end{align}
which implies
\begin{align}\label{massconservation}
\int_{\R^d} u_\infty dx=M>0.
\end{align}
This establishes \eqref{L1}. 

We next prove \eqref{Lm1}. Using the condition $1<m+1<2d/(d-2)$ and the compact Sobolev embedding $W^{1,2}(B_R(0)) \hookrightarrow L^q(B_R(0))$ for $1 \le q<2d/(d-2)$, we obtain local strong convergence in $L^{m+1}(B_R(0))$ for any fixed $0<R<\infty$. Moreover, it follows from the GNS inequality \eqref{GNS1} that
\begin{align}
\int_{|x| \ge R} u^{m+1} dx & \le C_* \left( \int_{|x| \ge R} u dx  \right)^{\frac{d+2-(d-2)m}{d+2}} \|\nabla u\|_{2}^{\frac{2dm}{d+2}} \nonumber \\
& \le C\left( M,C_*,F(u_0) \right)~ \left( \frac{m_2(t)}{R^2} \right)^{\frac{d+2-(d-2)m}{d+2}}
\end{align}
upon using \eqref{largeR}. Therefore, as $R \to \infty$ and $k \to \infty$,
\begin{align*}
& \int_{\R^d} \left|u(\cdot,t_k)-u_\infty \right|^{m+1} dx=\int_{|x| < R} \left|u(\cdot,t_k)-u_\infty \right|^{m+1} dx+\int_{|x| \ge R} \left|u(\cdot,t_k)-u_\infty \right|^{m+1} dx \\
\le & \int_{|x| < R} \left|u(\cdot,t_k)-u_\infty \right|^{m+1} dx+C \int_{|x| \ge R} \left(\left| u(\cdot,t_k) \right|^{m+1}+\left| u_\infty \right|^{m+1}\right) dx \to 0.
\end{align*}
This completes the proof of Theorem \ref{globalexistence}. 
\end{proof}

\iffalse
\begin{remark}
The variational approach used above fails for $m=\frac{d+2}{d-2}$, since the condition $m+1<\frac{2d}{d-2}$ essential for the implementation of 
the JKP scheme in \cite{Jose24} is no longer satisfied. This indicates that the existence problem for $m=\frac{d+2}{d-2}$ is fundamentally different and, to our knowledge, remains open. 
\end{remark}
\fi

\subsection{Possible dynamic behaviors for $F(u_0)>F(U_\ast)$ }\label{FU0great}

For initial data satisfying $F(u_0) > F(U_\ast)$, energy dissipation no longer prevents the solution from approaching the steady state $U_\ast$. The long-time behavior of global solutions is conjectured to depend on the boundedness of two key quantities:
\begin{align*}
&\textbf{(P1) } \sup_{t \ge 0} \| u(\cdot,t)\|_{m+1} < \infty, \quad \text{(concentration control)} \\
&\textbf{(P2) } \sup_{t \ge 0} \int_{\mathbb{R}^d} |x|^2 u(x,t)\,dx < \infty. \quad \text{(mass confinement)}
\end{align*}
Based on the gradient-flow structure and analogy with related models, we expect the following scenarios:
\begin{itemize}
    \item \textbf{(P1) and (P2):} The solution is expected to converge (possibly up to translation) to $U_\ast$.    
    \item \textbf{(P1) and not (P2):} Mass is expected to escape to infinity while the profile remains bounded in $L^{m+1}$. After a suitable time-dependent translation that follows the escaping core, the profile may converge locally to a steady state.    
    \item \textbf{not (P1) and (P2):} Unbounded concentration growth within a confined region suggests the possibility of infinite-time singularity formation as a Dirac mass, and the free energy would tend to $-\infty$.    
    \item \textbf{not (P1) and not (P2):} The solution may exhibit complex dynamics involving simultaneous concentration growth and mass dispersion, possibly leading to intricate asymptotic patterns.
\end{itemize}

\begin{remark}
With the exception of the first case, the precise asymptotic behavior in the other scenarios depends on further details of the model and the initial data. A rigorous analysis of these cases remains an open challenge.
\end{remark}

\section*{Data Availability Statement}
All data generated or analysed during this study are included in this published article (and its supplementary information files). All data that support the findings of this study are included within the article (and any supplementary files).

\section*{Statements and Declarations}
This research did not receive any specific grant from funding agencies in the public, commercial, or not-for-profit sectors. The author has no competing interests to declare that are relevant to the content of this article.

\end{document}